\documentclass[11pt]{amsart}
\usepackage{amsmath}
\usepackage{amsfonts}
\usepackage{amssymb}
\usepackage{amsthm}
\usepackage{url}
\usepackage[all]{xy}
\usepackage{dsfont}
\usepackage{graphicx}
\usepackage{caption}
\usepackage{subcaption}
\usepackage{comment} 
\usepackage{stmaryrd}
\usepackage{hyperref}
\usepackage{todonotes}
\usepackage{color}
\usepackage{enumerate,tikz-cd}
\usepackage[backend=biber,style=alphabetic,url=false,doi=false,isbn=false,sorting=nyt,maxbibnames=9,giveninits=true]{biblatex}
\usepackage{MnSymbol}
\usepackage{comment}
\usepackage{mathrsfs}
\allowdisplaybreaks
\usepackage{geometry}
\numberwithin{equation}{section}

\newtheorem{proposition}{Proposition}[section]

\newtheorem{lemma}[proposition]{Lemma}
\newtheorem{theorem}[proposition]{Theorem}
\newtheorem{corollary}[proposition]{Corollary}

\theoremstyle{definition}

\newtheorem{definition}[proposition]{Definition}

\DeclareMathOperator{\Supp}{Supp}

\DeclareMathOperator{\codim}{codim}

\DeclareMathOperator{\rk}{rk}

\DeclareMathOperator{\DF}{DF}

\DeclareMathOperator{\ord}{ord}

\newcommand{\R}{\mathbb{R}}

\newcommand{\A}{\mathbb{A}}

\newcommand{\Q}{\mathbb{Q}}
\newcommand{\G}{\mathbb{G}}

\newcommand{\PP}{\mathbb{P}}
\newcommand{\QQ}{\mathbb{Q}}
\newcommand{\OO}{\mathcal{O}}

\renewcommand{\epsilon}{\varepsilon}

\newcommand{\M}{\mathcal{M}}

\newcommand{\J}{\mathcal{J}}

\newcommand{\F}{\mathcal{F}}
\newcommand{\B}{\mathcal{B}}

\renewcommand{\L}{\mathcal{L}}
\newcommand{\X}{\mathcal{X}}
\newcommand{\Y}{\mathcal{Y}}

\newcommand{\W}{\mathcal{W}}

\renewcommand{\phi}{\varphi}

\newcommand\vol{\mathrm{vol}}

\newcommand\cA{\mathcal{A}}

\newcommand\cF{\mathcal{F}}

\newcommand\cN{\mathcal{N}}
\newcommand\cO{\mathcal{O}}

\title{Singularity criteria for K-stability of adjoint foliated structures}

\author[T. S. Papazachariou]{Theodoros Stylianos Papazachariou}
\address{Yau Mathematical Sciences Center, Jingzhai, Tsinghua University, Haidian District, Beijing, China.}
\email{tpapazachariou@mail.tsinghua.edu.cn}

\begin{document}

\begin{abstract}
    We prove singularity criteria for the $t$-K-stability of adjoint foliated structures. We first show that K-semistability of adjoint foliated structures implies log canonicity by extending Odaka's flag ideal characterisation of the mixed Donaldson--Futaki invariant to the adjoint foliated setting. We then prove that adjoint Calabi--Yau foliated structures are K-semistable, and klt ones are K-stable, while log canonical adjoint general type foliated structures are K-stable with respect to the canonical polarisation. We also show that K-semistable adjoint Fano foliated structures are klt. In particular, their ambient varieties are potentially klt and of Fano type.
\end{abstract}

\maketitle

% \setcounter{tocdepth}{1}
% {\hypersetup{hidelinks}
% \tableofcontents}

\section{Introduction}

In the last two decades, the theory of K-stability has become a central bridge between differential and birational geometry. Its original motivation comes from the existence problem for K{\"a}hler--Einstein metrics on Fano varieties, culminating in the solution of the Yau--Tian--Donaldson conjecture for Fano manifolds and
varieties \cite{CDS2013}, and in the construction of K-moduli spaces of Fano varieties (see \cite{Xu2025} for an excellent survey). Although these applications focus on the Fano case, K-stability is defined for arbitrary polarised varieties. A series of key results by Odaka \cite{Odaka11, Odaka12, Odaka13} show that K-stability imposes strong singularity conditions. K-semistable varieties have semi-log canonical singularities, while K-semistable Fano varieties are klt. Furthermore, semi-log canonical Calabi--Yau and canonically polarised varieties are always K-semistable and K-stable respectively. Historically, these results have provided key links between the differential and birational geometric approaches to K-stability that has culminated with the K-moduli construction mentioned above.

The purpose of this paper is to extend this circle of ideas to adjoint foliated structures, i.e. triples $(X,\F,t)$ where $X$ is an ambient variety and $\F$ is an algebraically integrable foliation with adjoint canonical divisor $K^{[t]}_{X,\F} = (1-t)K_X+tK_\F$. A notion of K-stability for adjoint foliated structures was defined in \cite{Pap26}, motivated by the substantial recent progress on the MMP of adjoint foliated structures \cite{CHLMSSX24, CHLMSSX25, CLSV26}. In particular, our first main result is as follows. 

\begin{theorem}[see Theorem \ref{thm:mixed-odaka-singularity-obstruction-normal}]\label{thm: intro main 1}
    Let $(X,\F,t)$ be a normal projective $\Q$-Gorenstein
adjoint foliated structure with $\F$ algebraically integrable, and $0<t<1$. Let $L$ be an ample $\Q$-Cartier divisor on $X$. If $(X,\F,L)$ is $t$-K-semistable, then $(X,\F,t)$ is log canonical.
\end{theorem}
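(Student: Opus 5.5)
We argue by contraposition, following Odaka's strategy for varieties and adapting it to the mixed setting. Assume $(X,\F,t)$ is not log canonical. Then there is a prime divisor $E$ over $X$, on some log resolution $\pi\colon Y\to X$ that is also foliated log smooth for $\F$ (available since $\F$ is algebraically integrable), whose adjoint discrepancy satisfies
\[
a^{[t]}(E;X,\F) = (1-t)\,a(E;X) + t\,a(E;\F) < -\epsilon(E)\cdot t
\]
in the appropriate adjoint normalisation, i.e. the adjoint log discrepancy is negative. Let $Z=c_X(E)$ be its centre and $\mathfrak a$ an ideal cosupported on $Z$ whose blow-up is dominated by $Y$ with $E$ appearing. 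We then form Odaka's flag ideal $\mathcal J=\mathfrak a^{N}+\mathfrak a^{N-1}s+\dots+(s^N)$ on $X\times\mathbb A^1$. The associated semi-test configuration $\mathcal B\to X\times\mathbb A^1$ is the normalised blow-up of $\mathcal J$, polarised by $\mathcal L=p^*L-c\,\mathcal E$ for small $c>0$. Because $\F$ is algebraically integrable, it pulls back to a foliation $\mathcal F_{\mathcal B}$ on $\mathcal B$, which is the product foliation $\F\boxtimes(\text{points of }\mathbb A^1)$ away from the central fibre.

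The key step is the mixed flag-ideal formula: I would extend Odaka's intersection-theoretic expression for the Donaldson--Futaki invariant to the $t$-mixed invariant. Concretely, the mixed invariant defined in \cite{Pap26} is a linear combination in $t$ of the ordinary and foliated DF invariants, each of which is an intersection number on the compactified $\bar{\mathcal B}$ over $\mathbb P^1$. Writing the relative adjoint canonical divisor as
\[
K^{[t]}_{\bar{\mathcal B}/\mathbb P^1,\mathcal F_{\mathcal B}}=p^*K^{[t]}_{X\times\mathbb P^1/\mathbb P^1}+\sum_i a^{[t]}_i E_i,
\]
the invariant splits into a term $\frac{\mu}{n+1}\mathcal L^{n+1}$ that is $O(c^{n+1})$ after cancelling the trivial part, and a canonical term
\[
\mathcal L^{n}\cdot \textstyle\sum_i a^{[t]}_i E_i.
\]
Here the foliated discrepancy of the exceptional divisors on $\mathcal B$ is computed from the discrepancy over $X$ plus the contribution of the central fibre. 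For the fibre direction, $\F_{\mathcal B}$ contains the fibres of $\mathcal B\to X$ only generically, so non-invariant versus invariant divisors contribute $0$ or $1$ times $t$ accordingly.

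Expanding in $c$, the leading term is $c^{n}\,(\text{positive constant})\cdot a^{[t]}(E)$ up to a normalisation shift. Choosing the flag ideal so that the shift is controlled (Odaka's trick of taking $N$ large, so the central fibre's own discrepancy contribution is dominated) makes this coefficient negative. Hence the mixed DF invariant is negative for $0<c\ll1$, contradicting $t$-K-semistability.

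I expect the main obstacle to be the discrepancy bookkeeping for the foliation on $\mathcal B$, namely the correct formula for $K_{\mathcal F_{\mathcal B}}$ versus $K_{\bar{\mathcal B}/\mathbb P^1}$ along the central fibre. The invariance or non-invariance of the components of $\mathcal B_0$ changes the coefficient by $\epsilon(E_i)$, and we must ensure that the relevant component is one whose adjoint discrepancy equals $a^{[t]}(E)$, which is negative, with no positive correction swamping it. I would first handle this by reducing to the case $\mathfrak a$ is the ideal of a point or a general point of $Z$, and by checking that the extra central-fibre contributions are of lower order in $c$. I would also check that normality and $\Q$-Gorensteinness of $X$ ensure that $K^{[t]}$ on $\bar{\mathcal B}$ is $\Q$-Cartier enough for the intersection numbers to make sense.
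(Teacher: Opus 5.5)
You have the paper's overall strategy (contrapositive, a flag ideal on $X\times\A^1$, and the sign of the leading term of $\DF^{[t]}$ as $c\to 0$). However, the choice of ideals, where all the work lies, has two genuine gaps.

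\textbf{First gap: the leading coefficient does not see $E$ alone.} Write $d:=\dim\Supp(\cO/\J)$ and let $\mathcal E$ be the exceptional divisor of $\B$. The leading coefficient is
$\sum_i a_i^{[t]}\,\bigl(\M^d\cdot(-\mathcal E)^{n-d}\cdot G_i\bigr)$, summed over \emph{all} exceptional divisors $G_i$ of $\B$ with $\dim\Pi(G_i)=d$, i.e.\ over all top-dimensional Rees valuations. Each carries a strictly positive weight (Lemma \ref{lem:relative-intersection-sign}), and all occur at the same order $c^{n-d}$. So the other central-fibre contributions are not of lower order in $c$. Passing to a point does not help either: then $d=0$ and every Rees valuation counts.

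An ideal $\mathfrak a$ chosen only so that $\ord_E$ is one of its Rees valuations will in general have further Rees valuations with $A^{[t]}_{X,\F}>0$, and these can make the sum positive. Nor is there a general way to choose $\mathfrak a$ with $\ord_E$ as its only Rees valuation; that would amount to extracting $E$ alone with $-E$ relatively ample.

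The missing ingredient is Lemma \ref{lem:relative-mixed-lc-modification}. Using the MMP for algebraically integrable adjoint foliated structures \cite{CHLMSSX24,CHLMSSX25}, the paper builds $\pi\colon Y\to X$ with $\sum_i A^{[t]}_{X,\F}(E_i)E_i$ $\pi$-ample. Then \emph{every} exceptional $E_i$ has $A^{[t]}_{X,\F}(E_i)<0$. Moreover $Y$ is the normalised blow-up of $I=\pi_*\cO_Y(-mA)$ with $A=-\sum_i A^{[t]}_{X,\F}(E_i)E_i$, whose Rees valuations are exactly the $\ord_{E_i}$. This is what supplies the uniform sign required in Proposition \ref{prop:mixed-odaka-negativity}.

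\textbf{Second gap: taking $N$ large does nothing.} Your flag ideal is $\mathfrak a^N+\mathfrak a^{N-1}s+\dots+(s^N)=(\mathfrak a+(s))^N$. It has the same normalised blow-up and Rees valuations as $\mathfrak a+(s)$ for every $N$. The Rees valuation $G$ over $E_i$ has $\ord_G|_{K(X)}=\ord_{E_i}$ and $\ord_G(s)=b_i:=\ord_{E_i}(\mathfrak a)$. By Lemma \ref{lem:mixed-gauss-rees-coefficient}, its coefficient is $A^{[t]}_{X,\F}(E_i)+(1-t)(b_i-1)$. This lemma also settles your bookkeeping worry: the invariant/transverse distinction is absorbed into $A^{[t]}_{X,\F}$, and the central-fibre shift comes only from the ambient part.

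That coefficient is positive as soon as $b_i>1-A^{[t]}_{X,\F}(E_i)/(1-t)$, for instance after replacing $\mathfrak a$ by a high power. The large exponent must go on $s$ alone. The paper takes $\J=\overline{(I+(s^q))^N}$ with every $b_i$ dividing $q$. This forces $\ord_G(s)=1$ and gives coefficient $\tfrac{q}{b_i}A^{[t]}_{X,\F}(E_i)<0$.

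A minor point: your non-lc inequality drops a term. $A^{[t]}_{X,\F}(E)<0$ means $(1-t)a(E,X)+ta(E,\F)<-(1-t)-t\varepsilon(E)$.
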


In order to prove Theorem \ref{thm: intro main 1}, we extend Odaka's flag ideal method to the adjoint foliated setting. We introduce a mixed $S$-coefficient associated to a flag ideal and show that its negativity forces negativity of the mixed Donaldson--Futaki invariant. If $(X,\F,t)$ is not log canonical, we prove that a relative log canonical modification for adjoint foliated structures exists, which allows us to extract divisors with negative mixed discrepancy. A suitably chosen flag ideal then produces an $\F$-compatible test configuration whose mixed $S$-coefficient is negative. This gives a destabilising test configuration, proving the contrapositive.

We then shift our focus to the three special classes of adjoint Calabi--Yau, general type and Fano foliated structures. In our second main result, we show that log canonical adjoint Calabi--Yau and general type foliated structures are K-semistable. We also show that K-semistable adjoint Fano foliated structures are klt. 

\begin{theorem}[See Theorems \ref{thm:mixed-odaka-CY}, \ref{thm:mixed-odaka-general-type} and \ref{thm:mixed-odaka-Fano}]\label{thm: main intro thm 2}
    Let $(X,\F,t)$ be a normal projective $\Q$-Gorenstein log canonical adjoint foliated structure for $0<t<1$. Then, 
    \begin{enumerate}
        \item if $(X,\F,t)$ is adjoint foliated Calabi--Yau then $(X,\F;L)$ is $t$-K-semistable for any ample polarisation $L$. Furthermore, if $(X,\F,t)$ is klt then $(X,\F;L)$ is $t$-K-stable for any ample polarisation $L$;
        \item if $(X,\F,t)$ is adjoint foliated general type then $(X,\F;L)$ is $t$-K-stable for $L = K_{X,\F}^{[t]}$;
        \item if $(X,\F,t)$ is adjoint foliated Fano which is $t$-K-semistable for $L = -K_{X,\F}^{[t]}$, then $(X,\F,t)$ is klt. In particular, $X$ is potentially klt, and of Fano type.
    \end{enumerate}
\end{theorem}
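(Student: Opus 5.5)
We follow Odaka's strategy for each of the three parts, working throughout with the mixed flag-ideal formula for the Donaldson--Futaki invariant developed for Theorem \ref{thm: intro main 1}. By the adjoint foliated analogue of Odaka's reduction, it suffices to test $\F$-compatible semi test configurations obtained as blow-ups $\mathcal{B}=\mathrm{Bl}_{\mathcal{I}}(X\times\A^1)$ along flag ideals $\mathcal{I}=I_0+I_1s+\cdots+(s^N)$, with polarisation $\mathcal{L}=L-E$, where $E$ is the exceptional divisor and $L$ is pulled back. The mixed Donaldson--Futaki invariant then splits into two pieces. The first is a \emph{canonical} term,
\[
\bigl(K^{[t]}_{\mathcal{B}/X\times\PP^1,\mathcal{F}_{\mathcal{B}}}\bigr)\cdot\mathcal{L}^n ,
\]
which is built from mixed discrepancies. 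The second is an \emph{intersection} term, proportional to
\[
\bigl(\mu\, n\,\mathcal{L}^{n+1}+(n+1)\,p^*K^{[t]}_{X,\F}\cdot\mathcal{L}^n\bigr),
\qquad
\mu=-\frac{K^{[t]}_{X,\F}\cdot L^{n-1}}{L^n}.
\]
The sign of the whole invariant is therefore controlled by the discrepancy contribution together with how $K^{[t]}_{X,\F}$ compares to $L$.

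For (1), assume $K^{[t]}_{X,\F}\equiv 0$. Then $\mu=0$ and the intersection term vanishes, so only the canonical term remains. Log canonicity means every mixed discrepancy $a^{[t]}(E_i)\ge -1$ (with the weighting convention for $\F$-invariant versus non-invariant divisors). Since $\mathcal{B}\to X\times\A^1$ has exceptional divisors lying over $X\times\{0\}$, the relative adjoint canonical divisor $K^{[t]}_{\mathcal{B}}-\pi^*K^{[t]}_{X\times\A^1}$ is effective modulo the central fibre. Its intersection with $\mathcal{L}^n$, for $\mathcal{L}$ relatively ample, is therefore $\ge 0$, giving $\DF\ge 0$. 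Under the klt assumption all discrepancies exceed $-1$, so the coefficient is strictly positive on every exceptional divisor. Any nontrivial flag ideal yields an exceptional divisor on which $\mathcal{L}^n$ is positive, so $\DF>0$, which gives K-stability.

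For (2), take $L=K^{[t]}_{X,\F}$, so $\mu=-1$. We rewrite the intersection term as
\[
(n+1)L\cdot\mathcal{L}^n-n\mathcal{L}^{n+1}=\mathcal{L}^n\cdot\bigl(L+nE\bigr)
\]
and expand this as a sum $\sum_i (L-E)^{i}L^{n-i}\cdot E$. Following Odaka's canonically polarised computation, this sum is positive for a nontrivial flag ideal, because $(L-E)$ is relatively ample and $E$ is effective and nonzero. The canonical term is nonnegative by log canonicity, as in (1), so $\DF>0$.

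For (3), we argue by contrapositive, and this is the main obstacle. Suppose $(X,\F,t)$ is lc but not klt, so there is a divisor $F$ over $X$ with $a^{[t]}(F)=-1$. The plan is to extract $F$ via a dlt or lc-modification type extraction, using the relative MMP for adjoint foliated structures already invoked for Theorem \ref{thm: intro main 1}. We then take the flag ideal $\mathcal{I}=(I_F^{\,m},s)$, or more generally a deformation to the normal cone of $F$, with small weight. For $L=-K^{[t]}_{X,\F}$ we have $\mu=1$, and the intersection term becomes a first-order quantity equal to a multiple of $-\int_0^\infty(\vol(L-xF)-\vol(L))\,dx$ minus $\vol(L)$ times the log discrepancy. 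This is a mixed $\beta$-invariant
\[
A^{[t]}(F)\,\vol(L)-\int_0^\infty\vol(L-xF)\,dx ,
\]
and $A^{[t]}(F)=0$ makes it strictly negative, so $\DF<0$ and $(X,\F)$ is not $t$-K-semistable.

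The delicate point is the case where $F$ is $\F$-invariant. Its mixed log discrepancy weights $K_X$ and $K_\F$ differently, and we must check that the test configuration is $\F$-compatible, so that the foliated term is computed by the same discrepancy. Granting that, klt follows. Then $(1-t)K_X+tK_\F$ being klt, combined with the known result that klt adjoint foliated structures with $\F$ algebraically integrable have $X$ potentially klt (via the comparison $K_\F\sim$ a boundary on a model), gives that $X$ is potentially klt. Since $-(K_X+\Delta)$ is ample for the resulting boundary $\Delta$, $X$ is of Fano type.
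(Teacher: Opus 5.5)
\textbf{Parts (1) and (2).} These follow the paper's route. You split $\DF^{[t]}$ on a model dominating $X\times\PP^1$ into a canonical term and a discrepancy term, control the canonical term via $K^{[t]}_{X,\F}\equiv 0$ or $L=mK^{[t]}_{X,\F}$, and use log canonicity to make $K^{[t]}_{\W/(X\times\PP^1)}$ effective.

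That effectivity is not automatic ``modulo the central fibre''. It is the content of Lemma~\ref{lem:mixed-gauss-rees-coefficient}: the ambient part contributes $A_X(v)+r-1$, while the foliated part contributes $A_{X,\F}(v)$ with no $r$-term. Hence the coefficient is $A^{[t]}_{X,\F}(v)+(1-t)(r-1)$, and you should prove this rather than assert it.

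Your klt argument in (1) is a legitimate and more direct alternative to the paper's bound $K^{[t]}_{\W/(X\times\PP^1)}\ge cE$ with $E\cdot\cN^n>0$. For a nontrivial normal test configuration, some component of $\X_0$ is not the transform of $X\times\{0\}$. Its strict transform $G$ is $\Pi$-exceptional with positive coefficient, and $\cN^n\cdot G>0$.

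In (2), however, your expansion is wrong. The sum $\sum_{i=0}^n(\M-E)^i\M^{n-i}\cdot E$ telescopes to $\M^{n+1}-\cN^{n+1}=-\cN^{n+1}$, not to $\cN^n\cdot(\M+nE)$. The correct identity is $\cN^n\cdot(\M+nE)=\sum_{i=1}^n(n+1-i)(-E^2)\cdot\cN^{n-i}\M^{i-1}$. The sign of its terms is a Hodge-index statement (Lemma~\ref{lem:odaka-canonical-part-inequality}, after \cite{Odaka11}), not a consequence of $\cN$ nef and $E$ effective.

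\textbf{The genuine gap is in (3).} You assert that the mixed Donaldson--Futaki invariant of a suitable test configuration equals $A^{[t]}_{X,\F}(F)\vol(L)-\int_0^\infty\vol(L-xF)\,dx$, but nothing here provides this.
\begin{enumerate}
\item Classically, this identification needs finite generation of the filtration induced by $F$, or an approximation argument through the Ding invariant. Neither is available for $\F$-compatible test configurations. Moreover, the usual source of finite generation (that $X$ is of Fano type) is one of your conclusions.
\item As written, $\int_0^\infty(\vol(L-xF)-\vol(L))\,dx$ diverges, and the intersection term you describe contains no log discrepancy at all.
\item The concrete test configuration you propose does not destabilise. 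Under log canonicity every exceptional coefficient $A^{[t]}_{X,\F}(v)+(1-t)(r-1)$ is $\ge 0$, and it vanishes only if $A^{[t]}_{X,\F}(v)=0$ \emph{and} $r=1$. For $(I_F^m,s)$, a Rees valuation $G$ over $F$ satisfies $c\,b=r$ with $b=\ord_F(I_F^m)\ge m$. Primitivity then gives $c=1$ and $\ord_G(s)=b$, so the coefficient is $(1-t)(b-1)>0$ for $m\ge 2$, even though $A^{[t]}_{X,\F}(F)=0$.
\item With small weight, this discrepancy contribution is exactly the leading $S$-coefficient term of Proposition~\ref{prop:mixed-S-leading-term}. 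That term is $\ge 0$, and $>0$ when $F$ is the only Rees valuation of $I_F^m$, so small weight cannot produce $\DF^{[t]}<0$.
\end{enumerate}

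\textbf{The missing idea} is to force $r=1$ on every Rees valuation. Take $I=h_*\cO_{X'}(-mA')$ on the qdlt modification, whose Rees valuations are exactly the lc places $E_i$. Put $b_i=\ord_{E_i}(I)$, and use $\J=\overline{(I+(s^q))^N}$ with $q$ divisible by all $b_i$. Then every $\Pi$-exceptional $G_\alpha$ has $\ord_{G_\alpha}(s)=1$ and $\ord_{G_\alpha}|_{K(X)}=\tfrac{q}{b_i}\ord_{E_i}$. Hence $K^{[t]}_{\B/(X\times\PP^1)}=0$ and the discrepancy term vanishes identically. For $L=-mK^{[t]}_{X,\F}$, the canonical term is $-\frac{1}{mr^{n+1}V(n+1)}\L_r^n\cdot(r\M+nE)<0$ by the same Odaka inequality as in (2). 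No volumes or finite generation enter, and $\F$-compatibility is \cite[Lemma 7.12]{Pap26}.

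Finally, being potentially klt gives no boundary $\Delta$ with $-(K_X+\Delta)$ ample. The Fano type conclusion needs \cite[Theorem C]{CHLMSSX25}.
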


The proof of parts 1 and 2 of Theorem \ref{thm: main intro thm 2} do not use the S coefficients as Theorem \ref{thm: intro main 1}, but instead estimate the components comprising the mixed Donaldson--Futaki invariant, extending \cite{Odaka11}. In particular, we take an an arbitrary normal $\F$-compatible test configuration and then pass to an equivariant common model dominating both the compactified test configuration and the product $X\times \PP^1$. On this model, the mixed Donaldson--Futaki invariant decomposes into a canonical term and a discrepancy term. The log canonicity assumption implies that the discrepancy term is non-negative. In the Calabi--Yau case the canonical term vanishes, while in the general type case, with polarisation proportional to $K^{[t]}_{X,\F}$, it is non-negative and is positive for non-trivial test configurations. Combining these, we obtain the required positivity that proves the first two parts of Theirem \ref{thm: main intro thm 2}.

For the adjoint Fano case, Theorem \ref{thm: intro main 1} already gives us log canonicity. As such we assume that $(X,\F,t)$ is not klt and use a qdlt modification extracting a divisor of mixed discrepancy zero. For the resulting test configuration, the discrepancy term vanishes while the canonical term is negative, contradicting $t$-K-semistability. Thus $t$-K-semistable adjoint Fano foliated structures
are klt. The final consequences follow from \cite[Theorem 1.10(1)]{CHLMSSX24}. In particular, this justifies the technical choice in \cite[\S 5]{Pap26}, where all ambient varieties are assumed to be potentially klt.

\subsection*{Structure of the paper}
The paper is organised as follows. In Section \ref{sec:preliminaries} we recall the necessary background on adjoint foliated structures, mixed log discrepancies, and $t$-K-stability. In Section \ref{sec: S-coeff_full} we introduce the mixed $S$-coefficient and prove the flag-ideal negativity criterion. In Section \ref{sec: Odaka sing cond 1} we prove the relative lc modification for adjoint foliated structures and we apply the criterion to prove Theorem \ref{thm: intro main 1}. In Section \ref{sec: odaka sing cond 2} we prove the Calabi--Yau, general type, and Fano cases of Theorem \ref{thm: main intro thm 2}.

\subsection*{Acknowledgments}
I would like to thank Caucher Birkar, Federico Bongiorno, Paolo Cascini, Ruadhai Dervan and Jihao Liu for many helpful conversations and valuable comments. I am supported by  Beijing Natural Science Foundation Project IS25037 and a Shuimu Scholar Programme Scholarship at Tsinghua University.

\section{Preliminaries}\label{sec:preliminaries}
\subsection{Foliations and adjoint foliated structures}

In this subsection we recall the basic definitions concerning foliations and adjoint foliated structures. We follow the standard conventions in the birational geometry of foliations; for general background on foliations we refer for instance to \cite{Brunella15,AD13, ACSS21, CS25}, and for adjoint foliated structures to \cite{CHLMSSX24,CHLMSSX25, CLSV26}.

\begin{definition}\label{def: foliation}
Let $X$ be a normal variety. A \emph{foliation} on $X$ is a saturated coherent subsheaf $T_\F\subseteq T_X$ which is closed under the Lie bracket. Its rank is the generic rank of $T_\F$, and its codimension is $\codim(\F):=\dim X-\rk({\F})$. Equivalently, we can regard $\F$ as an integrable distribution on $X$.
\end{definition}

Since $T_{\F}$ is saturated in the reflexive sheaf $T_X$, it is reflexive. In particular, $\det(T_{\F})$ is a rank one reflexive sheaf, hence corresponds to a Weil divisor class on $X$.

\begin{definition}
Let $X$ be a normal variety and let $\F$ be a foliation on $X$. We say that $\F$ is \emph{algebraically integrable} if the leaf of $\F$ through a very general point of $X$ is Zariski open in an algebraic subvariety of $X$. Equivalently, the closure of the leaf through a very general point is an algebraic subvariety.
\end{definition}

Equivalently, $\F$ is algebraically integrable if its general leaves are algebraic. In this case, after possibly replacing $X$ by a birational model, one may often view $\F$ as induced by a rational map
\[
\begin{tikzcd}
f\colon X \arrow[r, dashed] & Y
\end{tikzcd}
\]
whose general fibres are tangent to $\F$.

\begin{definition}\label{def: canonical class of foliation}
Let $X$ be a normal variety and let $\F$ be a foliation on $X$. The \emph{canonical class} of $\F$ is the Weil divisor class $K_{\F}$ determined by
\[
\OO_X(-K_{\F})\simeq \det(T_{\F}),
\]
or equivalently
\[
\OO_X(K_{\F})\simeq \det(T_{\F})^{\vee}.
\]
When $X$ is smooth, if $N_{\F}^{\vee}:=(T_X/T_{\F})^{\vee}$ denotes the conormal sheaf of $\F$, then
\[
K_X = K_{\F} + \det(N_{\F}^{\vee}).
\]
\end{definition}

\begin{definition}
A \emph{foliated pair} consists of a normal variety $X$ together with a foliation $\F$ such that $K_{\F}$ is $\QQ$-Cartier. Furthermore, we call the triple $(X,\F,L)$ a \emph{foliated polarised variety} if $(X,\F)$ is a foliated pair, and $L$ is an ample divisor on $X$.
\end{definition}

We now recall the main object of interest in this paper.

\begin{definition}\label{def: adjoint foliated structure}
An \emph{adjoint foliated structure} is a triple $(X,\F,t)$, where $X$ is a normal variety, $\F$ is a foliation on $X$, and $t\in[0,1]$. To such a triple we associate the $\QQ$-divisor
\[
K_{X,\F}^{[t]}:= tK_{\F}+(1-t)K_X
\]
called the \emph{mixed canonical divisor}. We say that $(X,\F,t)$ is $\QQ$-Gorenstein if $K_{X,\F}^{[t]}$ is $\QQ$-Cartier.
\end{definition}

For our purposes we will always implicitly assume that $\F$ is algebraically integrable, and a foliation will always mean an algebraically integrable foliation.

\begin{definition}
    We say that the adjoint foliated structure $(X,\F,t)$ is an \emph{adjoint general type foliated structure} if $K_{X,\F}^{[t]}$ is ample. Similarly, we say that the adjoint foliated structure $(X,\F,t)$ is an \emph{adjoint Fano foliated structure} if $-K_{X,\F}^{[t]}$ is ample. Furthermore, we say that the adjoint foliated structure $(X,\F,t)$ is an \emph{adjoint Calabi--Yau foliated structure} if $K_{X,\F}^{[t]}$ is (numerically) trivial.
\end{definition}

\subsection{Foliated log discrepancy}

Let $\mu:Y\to X$ be a birational morphism extracting a prime divisor $E$; denote by $\cF_Y$ the induced foliation on $Y$. We say that $E$ is \emph{$\cF$-invariant} if the foliation is tangent to $E$ generically (equivalently, $\cF_Y\subset T_Y(-\log E)$ generically); otherwise $E$ is \emph{transverse}. The foliated canonical class satisfies
$$
K_{\cF_Y}\ =\ \mu^*K_{\cF}+\sum_E a(E,\cF)\,E.
$$
We define
$$
\varepsilon(E)\ :=\
\begin{cases}
1, & E \text{ transverse},\\
0, & E \text{ $\cF$-invariant},
\end{cases}
\qquad
{\ A_{X,\cF}(E)\ :=\ \varepsilon(E)+a(E,\cF)\ }.
$$

For an adjoint foliated structure $(X,\F,t)$ we define the \emph{mixed log discrepancy},
\[
A^{[t]}_{X,\F}(E)
=
t\bigl(a(E,\F)+\varepsilon(E)\bigr)+(1-t)\bigl(a(E,X)+1\bigr).
\]

\begin{definition}
Let $(X,\F,t)$ be an adjoint foliated structure with $t\in[0,1]$. We say that $(X,\F,t)$ is
\begin{enumerate}
    \item \emph{log canonical} if $A^{[t]}_{X,\F}(E)\ge 0$ for every prime divisor $E$ over $X$;
\item \emph{klt} if $A^{[t]}_{X,\F}(E)> 0$ for every prime divisor $E$ over $X$;
\item \emph{$\epsilon$-log canonical} if $A^{[t]}_{X,\F}(E)\ge \epsilon$ for every prime divisor $E$ over $X$;
\item \emph{$\epsilon$-klt} if $A^{[t]}_{X,\F}(E)> \epsilon$ for every prime divisor $E$ over $X$.
\end{enumerate}
\end{definition}

When $t=0$, this definition recovers the usual singularity condition for $X$. When $t=1$, this definition recovers the foliated singularity condition. We will say that $X$ is \emph{potentially klt} if there exists effective divisor $\Delta$, such that $(X,\Delta)$ is klt.

\subsection{K-stability for adjoint foliated structures}

In this section we recall the notions of foliated test configurations and of K-stability for adjoint foliated structures that appeared in \cite{Pap26}.

Let $X$ be a normal projective variety of dimension $n$ over $\mathbb{C}$ and $L$ an ample $\mathbb{Q}$-line bundle on $X$. Let $\mathcal{F}\subset T_X$ be an algebraically integrable foliation such that the canonical divisor $K_{\mathcal{F}}$ is $\mathbb{Q}$-Cartier. 
% For $t\in[0,1]$ we define the \emph{mixed canonical divisor}
% \[
% K^{[t]}_{X,\F}:=(1-t)K_X+tK_{\F}.
% \]

\begin{definition}\label{def: foliated t.c.}
A \emph{foliated test configuration} for the polarised foliated variety $(X,\F,L)$ consists of a triple
\[
(\pi:\X\to \A^1,\ \F_{\X},\ \L)
\]
such that:

\begin{enumerate}
\item $(\X,\L)$ is a normal test configuration for $(X,L)$, i.e.\ $\pi:\X\to\A^1$ is a flat projective morphism endowed with a $\G_m$-action lifting the standard action on $\A^1$, $\L$ is a relatively ample $\Q$-line bundle, and
\[
(\X,\L)|_{\pi^{-1}(\A^1\setminus\{0\})}
\simeq (X,L)\times (\A^1\setminus\{0\})
\]
$\G_m$-equivariantly;

\item $\F_{\X}\subset T_{\X/\A^1}$ is a
$\G_m$-equivariant saturated integrable subsheaf;

\item over $\A^1\setminus\{0\}$ the foliation coincides with the
product foliation:
$$(\X,\F_{\X})|_{\pi^{-1}(\A^1\setminus\{0\})}
\simeq (X,\F)\times (\A^1\setminus\{0\});$$

\item $\F_{\X}$ is algebraically integrable.
\end{enumerate}

Such a test configuration will be called \emph{$\F$-compatible}.
\end{definition}

Given an arbitrary $\F$-compatible test configuration $\pi:(\X,\F_{\X},\L)\rightarrow \A^1$, we take $\bar{\pi}:(\bar{\X},\F_{\bar{\X}},\bar{\L})\rightarrow \PP^1$ for the natural compactification and we set $V:=L^n$ and 
\[\mu(X,\F,L) = \frac{-K^{[t]}_{X,\F}\cdot L^{n-1}}{L^n}\]
for the \emph{foliated slope}. Furthermore, we define the \emph{relative mixed canonical divisor} by
\[
K^{[t]}_{\bar{\X}/\PP^1}
:=
(1-t)K_{\bar{\X}/\PP^1}+tK_{\F_{\bar{\X}}}.
\]

\begin{definition}\label{def:DFwt-corrected}
Let $(\X,\ \F_{\X},\ \L)$
be a normal $\F$-compatible test configuration for $(X,\F,L)$.
We fix $t\in (0,1)$. The \emph{$t$-foliated Donaldson--Futaki invariant} is defined by
\[
\DF^{[t]}(\X,\F_{\X},\L) := \frac{1}{V}\left(\frac{n}{n+1}\,\mu(X,\F,L)\bar{\L}^{n+1}+ K^{[t]}_{\bar{\X}/\PP^1}\cdot \bar{\L}^n\right).
\]
\end{definition}

We will also call this invariant the ``mixed Donaldson--Futaki'' invariant for brevity. Note that the notions of product, trivial and special test configurations extend to those of $\F$-compatible product, trivial and special test configurations in the natural way.

\begin{definition}\label{def: k-stability definition}
Let $(X,\F,L)$ be a polarised foliated variety.

\begin{enumerate}
\item $(X,\F,L)$ is \emph{$t$-K-semistable}
if
\[
\DF^{[t]}(\X,\F_{\X},\L)\ge0
\]
for all normal $\F$-compatible test configurations.

\item $(X,\F,L)$ is \emph{$t$-K-stable}
if
\[
\DF^{[t]}(\X,\F_{\X},\L)>0
\]
for all non-trivial $\F$-compatible test configurations.

\item $(X,\F,L)$ is \emph{uniformly $t$-K-stable}
if there exists $\delta>0$ such that
\[
\DF^{[t]}(\X,\F_{\X},\L)
\ge
\delta\,J^{NA}(\X,\L)
\]
for all normal $\F$-compatible test configurations.
\item $(X,\F,L)$ is \emph{$t$-K-polystable} if it is $t$-K-semistable and \[
\DF^{[t]}(\X,\F_{\X},\L)= 0 \text{ if and only if }(\X,\F_{\X}, \L) \text{ is $\F$-compatible of product type.}
\] 
\end{enumerate}
\end{definition}
Here 
\[
J^{NA}(\X,\L)
:=
\frac{1}{V}
\left(
\L\cdot (\mu^*L)^n
-
\frac{1}{n+1}\L^{n+1}
\right)
\]
is the non-Archimedean $J$-functional of the test configuration. Note, that we will interchangably say that either $(X,\F,L)$ is $t$-K-semistable, or that the adjoint foliated structure $(X,\F,t)$ is $t$-K-semistable, especially if the polarisation is implicit.

\section{\texorpdfstring{The mixed $S$-coefficient and Odaka's negativity criterion}{The mixed S-coefficient and a negativity criterion}}\label{sec: S-coeff_full}

In this section we will define the mixed $S$-coefficient extending \cite[\S 3]{Odaka13}, and we will show that negativity of the $S$-coefficient implies K-instability. We will achieve this by studying the behaviour of the blow ups of flag ideals that give rise to $\F$-compatible test configurations, and by studying their associated mixed Donaldson--Futaki invariants.

\subsection{Mixed log discrepancies and the relative canonical term}\label{sec: mixed_log_dic}

Throughout this section $X$ is a normal projective variety of dimension $n$, $\F\subset T_X$ is an algebraically integrable foliation with $K_{\F}$ $\Q$-Cartier, and $t\in[0,1]$. We begin with the local coefficient computation which relates mixed log discrepancies on $X$ to relative mixed canonical divisors on birational models of $X\times\A^1$.

Let $p_1:X\times\A^1\to X$ be the first projection, and let $s$ denote the coordinate on $\A^1$. We write $p_1^{-1}\F$ for the product
foliation on $X\times\A^1$, regarded as a relative foliation over
$\A^1$.

\begin{definition}
Let $\Pi:\Y\to X\times\A^1$ be a normal birational model, and let $\F_{\Y}$ be the saturated birational transform of the product foliation. We define
\[
K^{[t]}_{\Y/(X\times\A^1)} := (1-t)K_{\Y/(X\times\A^1)}+ t\left(K_{\F_{\Y}}-\Pi^*p_1^*K_{\F}\right).
\]
\end{definition}

Let $\Pi:\Y\to X\times\A^1$ be a normal birational model carrying the saturated birational transform $\F_{\Y}$ of the product foliation. Let $G\subset \Y$ be a prime divisor whose centre on $X\times\A^1$ is contained in $X\times\{0\}$. Throughout we assume that $w:=\ord_G$ is $\G_m$-invariant and let $v:=w|_{K(X)}$, $r:=w(s)$. The following Lemma allows us to compare the coefficient of $K^{[t]}_{\Y/(X\times\A^1)}$ with the mixed log discrepancy. A similar version also appeared in \cite[Proposition 4.3]{Pap26}, with more restrictive assumptions. We include an expansion here with a proof for the reader's convenience.

\begin{lemma}
\label{lem:mixed-gauss-rees-coefficient}
We have 
\[
\operatorname{coeff}_G K^{[t]}_{\Y/(X\times\A^1)} = A^{[t]}_{X,\F}(v)+(1-t)(r-1).
\]
In particular, if $r=1$, then
\[
\operatorname{coeff}_G K^{[t]}_{\Y/(X\times\A^1)}=A^{[t]}_{X,\F}(v).
\]
\end{lemma}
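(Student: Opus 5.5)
We compute the two halves of $K^{[t]}_{\Y/(X\times\A^1)}$ separately at $G$ and then take the convex combination with weights $1-t$ and $t$. Throughout we use that $X\times\A^1$ carries the product foliation $p_1^{-1}\F$, with $K_{X\times\A^1}=p_1^*K_X$ and $K_{p_1^{-1}\F}=p_1^*K_{\F}$ as relative objects over $\A^1$. The key input is the Gauss extension / Rees valuation description of $\G_m$-invariant valuations on $X\times\A^1$. Since $w=\ord_G$ is $\G_m$-invariant with $w|_{K(X)}=v$ and $w(s)=r$, we have
\[
w\Bigl(\sum_k f_k s^k\Bigr)=\min_k\bigl(v(f_k)+kr\bigr).
\]
So $w$ is the quasi-monomial combination of $v$ (pulled back along $p_1$) and $\ord_{X\times\{0\}}$, with weights $(1,r)$.

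\emph{Step 1: the ambient term.} We show that
\[
A_{X\times\A^1}(w)=A_X(v)+r,
\]
where $A_X(v)=a(E,X)+1$ when $v$ is divisorial. This is the standard log discrepancy formula for Gauss extensions. One way to see it is to work on a model $X'\times\A^1$, where $X'\to X$ extracts the centre $E$ of $v$. The pair $(X'\times\A^1,\ E\times\A^1+X'\times\{0\})$ is log smooth near the generic point of $E\times\{0\}$. The valuation $w$ is then toric (monomial) with weights $(c,r)$ in these two divisors, where $c$ is the multiplicity with $v=c\,\ord_E$. The log discrepancy of a monomial valuation is $\sum_i(\text{weight}_i)\cdot(\text{log discrepancy of the boundary divisor})$. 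This gives $cA_X(E)+r\cdot 1=A_X(v)+r$. We also have $\operatorname{coeff}_G K_{\Y/(X\times\A^1)}=A_{X\times\A^1}(w)-1$, so this coefficient equals $A_X(v)+r-1$.

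\emph{Step 2: the foliated term.} The same monomial computation is carried out for the foliation $\F_{X'}\times\A^1$, regarded as a foliation relative to $\A^1$. Two facts drive it. First, $X'\times\{0\}$ is a fibre of the projection, so it is invariant for the relative foliation and has foliated log discrepancy $\varepsilon=0$. Second, $E\times\A^1$ has foliated log discrepancy $A_{X,\F}(E)=a(E,\F)+\varepsilon(E)$, since $\F_{X'}\times\A^1$ is a product. Foliated log discrepancies of toroidal valuations are linear in the weights, with the relevant boundary being the non-invariant part. This is the step I expect to be the main obstacle. I would verify it via the foliated toroidal structure near the generic point of $E\times\{0\}$, as in the local computations for algebraically integrable foliations in \cite{CHLMSSX24}. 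Alternatively, one can do a direct weighted blow-up computation, where the foliation is generated by $T_{\F_{X'}}$ and carries no $\partial_s$ direction. Granting it, $A_{p_1^{-1}\F}(w)=A_{X,\F}(v)+r\cdot 0$. Subtracting $\Pi^*p_1^*K_\F$ and accounting for $\varepsilon(G)=0$, we get
\[
\operatorname{coeff}_G\bigl(K_{\F_{\Y}}-\Pi^*p_1^*K_{\F}\bigr)=A_{X,\F}(v).
\]
Here $G$ is invariant for the relative foliation $\F_\Y\subset T_{\Y/\A^1}$, because $G$ lies over $s=0$.

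\emph{Step 3: combine.} Taking the weighted sum of Steps 1 and 2 gives
\[
(1-t)\bigl(A_X(v)+r-1\bigr)+tA_{X,\F}(v)=A^{[t]}_{X,\F}(v)+(1-t)(r-1),
\]
which is the claimed formula. Setting $r=1$ gives the special case.
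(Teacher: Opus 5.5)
Your proposal is correct and follows essentially the paper's route. Both arguments use the Gauss-extension description of $w$, pass to a model on which $v=c\,\ord_E$ with $\F$ regular at the generic point of $E$, get $A_X(v)+r-1$ for the ambient term from the product computation, and get $c\,a(E,\F)+c\,\varepsilon(E)$ for the foliated term. The linearity in the weights that you grant in Step 2 is exactly what the paper checks by hand in coordinates adapted to the foliation; such coordinates are genuinely needed, since linearity fails for foliations not compatible with the toroidal structure. In the invariant case the lifted generators acquire no pole along $G$, and in the transverse case the relative lift $D$ with $D(z)=1$, $D(s)=0$ has a pole of order exactly $c$ along $G$, independent of $r$.
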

\begin{proof}
The statement is local at the generic point of $G$. Since $w$ is invariant under the $\G_m$-action and the $f_is^i$ are weight vectors, $w$ is determined by its values on homogeneous components. Hence for a finite sum of homogeneous terms we obtain the formula
\[
w\left(\sum_i f_i s^i\right)
=
\min_i\{v(f_i)+ir\}.
\]
Equivalently, $w$ is the homogeneous extension of $v:=w|_{K(X)}$ with $w(s)=r$.

We choose a birational model $\mu:Z\to X$ on which $v=c\ord_P$ for a prime divisor $P\subset Z$. Replacing $Z$ by a higher model if necessary, we may assume that $Z$ is smooth at the generic point of $P$, that $P=\{z=0\}$ locally, and that the birational transform $\F_Z$ is regular at this generic point. This is possible since $T_{\F_Z}$ is saturated in $T_Z$, and on a smooth model, a saturated subsheaf of a locally free sheaf is locally free in codimension one, hence $T_{\F_Z}$ is locally free at the generic point of the divisor $P$. After restricting to that generic point, the foliation is regular.

We may also replace $\Y$ by a higher model dominating $Z\times\A^1$, since the coefficient of $G$ in the relative divisor is computed at the generic point of $G$. Thus, locally, we may compute the required coefficients on a model
\[
\rho:\Y\longrightarrow Z\times \A^1
\]
which extracts $G$. We have $w(z)=c$ and $ w(s)=r$.

The ordinary product discrepancy computation gives
\[
\operatorname{coeff}_G K_{\Y/(X\times\A^1)} = A_X(v)+r-1
\]
since the contribution from the $X$-direction is $A_X(v)$, while the extension with weight $r=w(s)$ contributes $r-1$ from the base parameter.

We now compute the foliated contribution. We write
\[
K_{\F_Z} = \mu^*K_{\F}+a(P,\F)P+\cdots .
\]
Pulling back to $Z\times\A^1$, the divisor $P\times\A^1$ appears in $K_{p_1^{-1}\F_Z}-p_1^*K_{\F}$ with coefficient $a(P,\F)$. Since $w(z)=c$, this contribution gives $c\,a(P,\F)$ to the coefficient along $G$.

Let $q=\operatorname{rk}\F$. Since the computation is local at the generic point of $P$, we may choose a local frame $\delta_1,\ldots,\delta_q$ for $T_{\F_Z}$. We regard these as relative vector fields on $Z\times\A^1$ by requiring $\delta_i(s)=0$ for every $i$. The saturated transform of the product foliation on
$\Y$ is obtained by saturating the pullbacks of these relative vector
fields inside $T_{\Y/\A^1}$.

First suppose that $P$ is $\F_Z$-invariant. Then $\delta_i(z)\in (z)$ for every $i$. Equivalently, $\delta_i(z)/z$ is regular at the generic point of $P$. Let $m=z^a s^b$ be a local toroidal monomial. Since $\delta_i(s)=0$, we have
\[
\delta_i(m)
=
m\left(a\frac{\delta_i(z)}{z}
      +b\frac{\delta_i(s)}{s}\right)
=
a\,m\frac{\delta_i(z)}{z}.
\]
Thus $w(\delta_i(m))\ge w(m)$, and hence the pullback of each $\delta_i$ has no pole along $G$. After saturation, no additional factor of a local equation of $G$ is introduced. Therefore
\[
\operatorname{coeff}_G
\left(
K_{\F_{\Y}}-\rho^*K_{p_1^{-1}\F_Z}
\right)
=
0.
\]
This is $c\,\varepsilon_{\F}(P)$ in the invariant case, since
$\varepsilon_{\F}(P)=0$.

Now let $P$ be $\F_Z$-transverse. {\'E}tale-locally, or after passing to the corresponding toroidal local model, the valuation $w$ is represented by coordinates $z=x^c$, $s=x^r u$ with $G=\{x=0\}$. Let $D$ be the rational relative lift of $\delta_1$, so that $D(z)=1$, $D(s)=0$. Writing $D=A\partial_x+B\partial_u$, these two equations give $c x^{c-1}A=1$ and $r x^{r-1}uA+x^rB=0$. Hence $A=\frac{1}{c x^{c-1}}$ and $B=-\frac{r u}{c x^c}$. Thus $D$ has pole order exactly $c$ along $G$. Multiplying by $z=x^c$ removes the pole:
\[
        zD
        =
        \frac{x}{c}\partial_x-\frac{r u}{c}\partial_u,
\]
which is regular and nonzero at the generic point of $G$. Therefore, after saturation, the transverse generator contributes exactly one factor of valuation $c=w(z)$, and no factor involving $r$. Consequently
\[
        \det T_{\F_Y}
        =
        \rho^*\det T_{p_1^{-1}\F_Z}\otimes \mathcal O_Y(-cG)
\]
near the generic point of $G$, or equivalently,
\[
K_{\F_{\Y}}
=
\rho^*K_{p_1^{-1}\F_Z}+cG+\cdots .
\]
Hence
\[
\operatorname{coeff}_G
\left(
K_{\F_{\Y}}-\rho^*K_{p_1^{-1}\F_Z}
\right)
=
c.
\]
This is $c\,\varepsilon_{\F}(P)$ in the transverse case, since
$\varepsilon_{\F}(P)=1$.

Combining the invariant and transverse cases, we obtain
\[
\operatorname{coeff}_G
\left(
K_{\F_{\Y}}-\rho^*K_{p_1^{-1}\F_Z}
\right)
=
c\,\varepsilon_{\F}(P).
\]
Adding the contribution already coming from
$K_{\F_Z}-\mu^*K_{\F}$, we get
\[
\operatorname{coeff}_G
\left(
K_{\F_{\Y}}-\Pi^*p_1^*K_{\F}
\right)
=
c\,a(P,\F)+c\,\varepsilon_{\F}(P).
\]
Since $v=c\ord_P$, the right-hand side is precisely $A_{X,\F}(v)$. Therefore
\[
\operatorname{coeff}_G
\left(
K_{\F_{\Y}}-\Pi^*p_1^*K_{\F}
\right)
=
A_{X,\F}(v).
\]

Now, combining this expression with the ambient discrepancy computation gives
\[
\begin{aligned}
\operatorname{coeff}_G K^{[t]}_{\Y/(X\times\A^1)} &= (1-t)\operatorname{coeff}_G K_{\Y/(X\times\A^1)} + t\operatorname{coeff}_G \left( K_{\F_{\Y}}-\Pi^*p_1^*K_{\F}\right) \\
&=(1-t)(A_X(v)+r-1)+tA_{X,\F}(v) \\
&=A^{[t]}_{X,\F}(v)+(1-t)(r-1).
\end{aligned}
\]
\end{proof}

Using the above Lemma, we obtain natural effectivity of the divisor $K^{[t]}_{\Y/(X\times\A^1)}$, which will be used in mixed Donaldson--Futaki invariant computations.

\begin{corollary}
\label{cor:mixed-relative-effective}
Assume that $(X,\F,t)$ is log canonical. Let $\Pi:\Y\to X\times\A^1$ be a normal $\G_m$-equivariant birational model which is an isomorphism over $X\times(\A^1\setminus\{0\})$, and let $\F_{\Y}$ be the saturated birational transform of the product foliation. Then $K^{[t]}_{\Y/(X\times\A^1)}$ is effective.
\end{corollary}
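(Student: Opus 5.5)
The plan is to check effectivity coefficient by coefficient, using Lemma \ref{lem:mixed-gauss-rees-coefficient}. Since $\Pi$ is an isomorphism over $X\times(\A^1\setminus\{0\})$, the divisor $K^{[t]}_{\Y/(X\times\A^1)}$ is supported on the exceptional locus of $\Pi$ together with the strict transform of $X\times\{0\}$. All of these prime divisors have centre contained in $X\times\{0\}$. Moreover, over the product locus $\F_{\Y}$ agrees with $p_1^{-1}\F$, so no other components can occur. It therefore suffices to show that $\operatorname{coeff}_G K^{[t]}_{\Y/(X\times\A^1)}\ge 0$ for every prime divisor $G\subset\Y$ lying over $X\times\{0\}$.

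Fix such a $G$. Because $\Pi$ is $\G_m$-equivariant and $\G_m$ is connected, $G$ is $\G_m$-stable, so $w=\ord_G$ is a $\G_m$-invariant valuation. We set $v=w|_{K(X)}$ and $r=w(s)$. The centre of $G$ lies in $\{s=0\}$ and $s$ is regular, so $r\ge 1$ is a positive integer. By Lemma \ref{lem:mixed-gauss-rees-coefficient},
\[
\operatorname{coeff}_G K^{[t]}_{\Y/(X\times\A^1)}=A^{[t]}_{X,\F}(v)+(1-t)(r-1).
\]
The second term is nonnegative, since $t\le 1$ and $r\ge1$.

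For the first term, there are two cases. If $v$ is trivial, then $G$ is the strict transform of $X\times\{0\}$ (here $r=1$), and the coefficient is zero by direct inspection. Equivalently, the lemma should be read with $A^{[t]}_{X,\F}(\mathrm{triv})=0$. Otherwise $v$ is a divisorial valuation on $X$: the restriction of a divisorial valuation of $X\times\A^1$ to $K(X)$ is of the form $c\ord_P$ for some prime divisor $P$ over $X$ and some rational $c>0$. This is the same Zariski/Abhyankar fact already used in the proof of the lemma. Then
\[
A^{[t]}_{X,\F}(v)=c\,A^{[t]}_{X,\F}(P)\ge 0
\]
by log canonicity of $(X,\F,t)$, by homogeneity of the log discrepancy in the valuation.

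The step I expect to need the most care is justifying that $v$ is divisorial, or trivial, with this homogeneity, so that the log canonical hypothesis, which is phrased for prime divisors over $X$, actually applies. I would handle it by quoting the standard result that the restriction of a $\G_m$-invariant divisorial valuation on $X\times\A^1$ with $w(s)>0$ is a multiple of a divisorial valuation or trivial, as in Boucksom--Hisamoto--Jonsson. This is also the setting of the lemma, where $v=c\ord_P$ was assumed implicitly.
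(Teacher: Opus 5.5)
Your proposal is correct and follows the paper's proof. You reduce to prime divisors lying over $X\times\{0\}$, apply Lemma \ref{lem:mixed-gauss-rees-coefficient}, and conclude from $A^{[t]}_{X,\F}(v)\ge 0$ (log canonicity), $r\ge 1$ and $1-t\ge 0$. Your separate treatment of the strict transform of $X\times\{0\}$ (trivial $v$, where $\Pi$ is a local isomorphism, so the coefficient is zero) and your justification that $v=c\,\ord_P$ is divisorial are small refinements of points the paper handles simply by observing that $K^{[t]}_{\Y/(X\times\A^1)}$ is $\Pi$-exceptional.
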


\begin{proof}
Let $G$ be a prime divisor on $\Y$ exceptional over $X\times\A^1$. Since $\Pi$ is an isomorphism away from the central fibre, the centre of $G$ is contained in $X\times\{0\}$. Since the construction is $\G_m$-equivariant and $\G_m$ is connected, $G$ is $\G_m$-invariant. Moreover, since $\Pi$ is an isomorphism at the generic point of every non-$\Pi$-exceptional divisor and $\F_Y$ is the saturated birational transform of the product foliation, the divisor $K^{[t]}_{\Y/(X\times \A^1)}$ is $\Pi$-exceptional. Thus it suffices to check the coefficients along $\Pi$-exceptional prime divisors.

Let, as before, $v=\ord_G|_{K(X)}$ and $r=\ord_G(s)$. By Lemma \ref{lem:mixed-gauss-rees-coefficient},
\[
\operatorname{coeff}_G K^{[t]}_{\Y/(X\times\A^1)}=A^{[t]}_{X,\F}(v)+(1-t)(r-1).
\]
Since $(X,\F,t)$ is log canonical, $A^{[t]}_{X,\F}(v)\ge0$, and $1-t>0$. Moreover $r=\ord_G(s)\ge1$. Therefore the coefficient of $G$ is nonnegative. This proves the required effectivity.
\end{proof}

\subsection{\texorpdfstring{The mixed $S$-coefficient}{The mixed S-coefficient}}

We next compute the expansion fo the mixed Donaldson--Futaki invariant in normalised blow ups of flag-ideals. Let us first recall the definition of flag ideals.

\begin{definition}
    A coherent ideal sheaf $\J$ of $X \times \A^1$ is called a \emph{flag ideal} if $\J = I_0+I_1k+\dots +I_{N-1}k^{N-1}+(k^N)$, where $I_0 \subseteq I_1 \subseteq \dots\subseteq I_{N-1} \subseteq \cO_X$ is the sequence of coherent ideal sheaves . It is equivalent to that the corresponding subscheme is supported on the central fiber $X\times \{0\}$ and is $\G_m$-invariant under the natural action of $\G_m$ on $X \times \A^1$.
\end{definition}

Let $\J\subset \cO_{X\times\A^1}$ be a $\G_m$-invariant flag ideal, and let
\[
\Pi:\B:=\operatorname{Bl}_{\J}(X\times\A^1)^\nu
\longrightarrow X\times\A^1
\]
be the normalised blow-up, with exceptional divisor $E$. We will write $\cO_{\B}(-E):= \J\cdot\cO_{\B}$ and $\M:=\Pi^*p_1^*L$. For $r\gg0$, we define $\L_r:=r\M-E$ and compactify over $\PP^1$. By abuse (and ease) of notation we will use the same notation for the compactified objects. Let $Z:=\operatorname{Supp}\left(\cO_{X\times\A^1}/\J\right)$ where $s:=\dim Z$. 

\begin{definition}
The \emph{mixed $S$-coefficient} of $\J$ with respect to
$(X,\F,L,t)$ is
\[
S^{[t]}_{(X,\F),L}(\J) := \M^s\cdot(-E)^{n-s} \cdot K^{[t]}_{\B/(X\times\PP^1)}.
\]
\end{definition}

We now show how the $S$-coefficient characterises $t$-K-stability.

\begin{lemma}
\label{lem:dimension-vanishing-flag}
Let $\Pi:\B\to X\times \PP^1$ be the compactified normalised blow-up of a flag ideal $\J$, and let $\M:=\Pi^*p_1^*L$. Let $T$ be a cycle on $\B$ whose support maps into a closed subset $Z\subset X\times\{0\}$ of dimension at most $s$. Then $M^j\cdot T=0$ for all $j>s$. Moreover, for every $\Q$-Cartier divisor $D$ on $X$, $\Pi^*p_1^*D\cdot M^j\cdot T=0$ for all $j \geq s$.
\end{lemma}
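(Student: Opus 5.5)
The plan is to compute these intersection numbers by the projection formula, pushing everything down along $\Pi$. Since $\M=\Pi^*p_1^*L$ and $\Pi^*p_1^*D$ are pullbacks of Cartier divisors on $X\times\PP^1$, the projection formula gives
\[
\M^j\cdot T=\deg\bigl((p_1^*L)^j\cdot \Pi_*T\bigr),\qquad \Pi^*p_1^*D\cdot\M^j\cdot T=\deg\bigl(p_1^*D\cdot (p_1^*L)^j\cdot \Pi_*T\bigr).
\]
We may work with a cycle $T$ of the dimension matching the number of divisors intersected, and by linearity we may assume $T$ is a prime cycle $[V]$. Then $\Pi_*[V]=\deg(V/\Pi(V))\,[\Pi(V)]$ if $\dim\Pi(V)=\dim V$, and $\Pi_*[V]=0$ otherwise. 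In either case $\Pi_*T$ is a cycle supported on $Z\subset X\times\{0\}$, hence is of the form $W\times\{0\}$ with $W\subset X$ of dimension at most $s$.

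For the first claim, suppose $j>s$. We only need $\dim T=j$ for the number to be defined. If $\dim\Pi(V)<\dim V=j$, the pushforward vanishes. Otherwise $\dim\Pi(V)=j>s\ge\dim Z$, which is impossible since $\Pi(V)\subset Z$. So $\Pi_*T=0$ and $\M^j\cdot T=0$.

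For the second claim, take $j\ge s$ with $\dim T=j+1>s$. By the same dimension argument, $\Pi_*T=0$, and the intersection with $p_1^*D\cdot(p_1^*L)^j$ vanishes. Here $D$ need only be $\Q$-Cartier, so the projection formula applies after clearing denominators.

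An alternative, more intrinsic route is to write $(p_1^*L)^j\cdot[W\times\{0\}]=(L^j\cdot[W])\times\{0\}$. This vanishes because $L^j$ cuts a cycle of dimension at most $s$ on $X$ in dimension $\dim W-j<0$ when $j>s$. Similarly, $D\cdot L^j\cdot[W]$ lives in dimension $s-j-1<0$ when $j\ge s$.

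The only subtle point is bookkeeping rather than a real obstacle. One has to make sure that $T$ is allowed to have arbitrary dimension, interpreting intersections of the wrong dimension as zero, and that the compactification over $\PP^1$ does not change the support condition. The latter holds because $\Pi$ is an isomorphism away from $X\times\{0\}$, so the exceptional locus, and hence every such $T$ arising later (for instance $E^{n-s}$-type cycles), still maps into $Z$.
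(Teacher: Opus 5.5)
Your proof is correct, but it uses a different mechanism from the paper's. The paper proves the lemma by moving $\M$. After passing to a multiple of $L$, there are $j>s$ general members $H_1,\dots,H_j\in|L|$ whose common intersection misses $p_1(Z)$. So $\M^j\cap T$ is a class supported on $|T|\cap\Pi^{-1}p_1^{-1}(H_1\cap\cdots\cap H_j)=\emptyset$, and the second claim is the same count with $j+1$ pulled-back classes. You instead push forward. By the projection formula the number depends only on $\Pi_*T$, and $\Pi_*T=0$ because every component of $T$ has dimension larger than $\dim Z$.

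\textbf{What each route gives.}
\begin{itemize}
\item The paper's general-position argument kills $\M^j\cap T$ as a Chow class, for $T$ of any dimension. So it does not matter in which order the remaining factors $(-E)^{n-j}$ and $K^{[t]}_{\B/(X\times\PP^1)}$ are intersected in Proposition \ref{prop:mixed-S-leading-term}.
\item Your argument only shows that degrees vanish. It therefore requires, as you say at the start, absorbing every factor that is not pulled back into $T$, so that $\dim T$ is exactly $j$ (respectively $j+1$). This is legitimate, since a class such as $(-E)^{n-j}\cdot G$ lives in the Chow group of $|E|\cap|G|\subset\Pi^{-1}(Z)$.
\item In the other direction, your route handles an arbitrary $\Q$-Cartier $D$ with no extra work. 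The paper's ``general members'' wording for $D$ tacitly needs $D$ written as a difference of very ample divisors.
\end{itemize}

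\textbf{Two small wording points.}
\begin{itemize}
\item ``Interpreting intersections of the wrong dimension as zero'' is not what happens. When $\dim T>j$, the product $\M^j\cdot T$ is a genuine positive-dimensional class. It does vanish, but by the paper's argument, not by yours. What your proof actually uses is the absorption convention above.
\item In your last paragraph, the exceptional locus maps into $Z$ because $\Pi$ is an isomorphism away from $Z=\Supp(\cO_{X\times\A^1}/\J)$. Being an isomorphism away from $X\times\{0\}$ alone would not give this.
\end{itemize}
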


\begin{proof}
Since $Z\subset X\times\{0\}$, its image under $p_1$ has dimension at most $s$. The divisor $\M$ is pulled back from $X$. Thus, after replacing $L$ by a sufficiently divisible multiple, $j>s$ general members of $|L|$ have empty intersection with $p_1(Z)$. Pulling them back to $\B$ gives $\M^j\cdot T=0$. 

The second assertion follows from the same dimension count. The product $\Pi^*p_1^*D\cdot \M^j$ consists of $j+1$ divisor classes pulled back from $X$. If $j\ge s$, then $j+1>s$, so their intersection with a cycle mapping to $p_1(Z)$ is zero.
\end{proof}

We will denote by $\F_\B$ the (saturated) birational transform of the product foliation. Note that by \cite[Lemma 7.12]{Pap26}, $(\B,\F_\B, \L_r)$ is a (semi-) ample test configuration.

\begin{proposition}
\label{prop:mixed-S-leading-term}
There exists an assymptotic expansion
\[
\DF^{[t]}(\B,\F_{\B},\L_r) = \frac{\binom ns}{L^n}
S^{[t]}_{(X,\F),L}(\J)r^{s-n} + O(r^{s-n-1}).
\]
In particular, if $S^{[t]}_{(X,\F),L}(\J)<0$ then $(X,\F,L)$ is not $t$-K-semistable.
\end{proposition}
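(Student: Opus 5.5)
We expand $\DF^{[t]}(\B,\F_{\B},\L_r)$ as a polynomial in $r$ by multilinearity of intersection numbers, and identify the leading coefficient with the help of Lemma \ref{lem:dimension-vanishing-flag}. Throughout, $\B$ denotes the compactified normalised blow-up over $\PP^1$ and $\L_r=r\M-E$.

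\textbf{Splitting the relative canonical term.} On $\B$ we write
\[
K^{[t]}_{\B/\PP^1}=\Pi^*p_1^*K^{[t]}_{X,\F}+K^{[t]}_{\B/(X\times\PP^1)}.
\]
This holds because $K_{X\times\PP^1/\PP^1}=p_1^*K_X$, and the foliated canonical class of the product foliation is $p_1^*K_\F$. We then expand
\[
\bar\L_r^{n+1}=\sum_k\binom{n+1}{k}r^k\M^k\cdot(-E)^{n+1-k}.
\]
The top term $r^{n+1}\M^{n+1}$ vanishes, since $\M$ is pulled back from the $n$-dimensional $X$. Every term with $k\le n$ contains a factor of $E$, whose support lies over $Z$ with $\dim Z=s$. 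Applying Lemma \ref{lem:dimension-vanishing-flag} with $T=E^{n+1-k}$ kills all terms with $k>s$. The same argument applies to $\Pi^*p_1^*K^{[t]}_{X,\F}\cdot\bar\L_r^n$: terms with no $E$ factor are $n+1$ pullbacks from $X$ and vanish, and the rest vanish for $k\ge s$ by the second assertion of the lemma. Hence both contributions are $O(r^{s-1})$.

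\textbf{Identifying the leading term.} The divisor $K^{[t]}_{\B/(X\times\PP^1)}$ is $\Pi$-exceptional and supported over $Z$; its coefficients are given by Lemma \ref{lem:mixed-gauss-rees-coefficient}. Therefore in
\[
K^{[t]}_{\B/(X\times\PP^1)}\cdot(r\M-E)^n=\sum_k\binom nk r^k\,\M^k\cdot(-E)^{n-k}\cdot K^{[t]}_{\B/(X\times\PP^1)}
\]
the terms with $k>s$ vanish by Lemma \ref{lem:dimension-vanishing-flag}. The leading term is $\binom ns r^s S^{[t]}_{(X,\F),L}(\J)$.

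\textbf{Normalising.} Since $\DF^{[t]}$ is homogeneous of degree $0$ in $\L$ (the slope and $V$ transform compatibly), we compute with $\L_r/r$ in place of $\L_r$. The intersection numbers then scale by $r^{-n}$ for the $\L^n$ terms and $r^{-n-1}$ for $\L^{n+1}$. The volume is $(\L_r/r)^n|_{X}=L^n=V$, and $\mu$ is fixed. After normalising, the $\bar\L^{n+1}$ contribution is $O(r^{s-1-n-1})$ and the pulled-back canonical contribution is $O(r^{s-1-n})$. The remaining term is
\[
\tfrac1{L^n}\binom ns S^{[t]}_{(X,\F),L}(\J)\,r^{s-n}+O(r^{s-n-1}),
\]
which gives the expansion.

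For the final claim, if $S^{[t]}_{(X,\F),L}(\J)<0$ then $\DF^{[t]}<0$ for $r\gg0$. By \cite[Lemma 7.12]{Pap26}, $(\B,\F_\B,\L_r)$ is an $\F$-compatible test configuration, possibly only semiample. If it is semiample, we pass to its ample model; $\DF^{[t]}$ is unchanged, as in Odaka's argument. The existence of such a test configuration with negative invariant contradicts $t$-K-semistability.

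\textbf{Main obstacle.} The delicate step is the bookkeeping of the $K^{[t]}_{X,\F}$ pullback term. It appears to contribute at order $r^{s}\M^{s}(-E)^{n-s}\cdot\Pi^*p_1^*K^{[t]}_{X,\F}$, and we must check that this term indeed vanishes. It does, by the second statement of Lemma \ref{lem:dimension-vanishing-flag} with $j=s$, using that $E^{n-s}$ maps into $Z$. We must also make sure the definition of $K^{[t]}_{\B/\PP^1}$ coincides with the decomposition above, including the saturated foliated transform near the fibre at infinity, where everything is a product.
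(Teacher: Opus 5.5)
Your proof is correct and follows the paper's argument essentially step for step: the same splitting $K^{[t]}_{\B/\PP^1}=\Pi^*p_1^*K^{[t]}_{X,\F}+K^{[t]}_{\B/(X\times\PP^1)}$ into canonical and discrepancy parts, the same multilinear expansion, and the same use of Lemma \ref{lem:dimension-vanishing-flag}, including its second assertion at $j=s$ to kill the pulled-back $K^{[t]}_{X,\F}$ term; your remark about passing to the ample model is an extra precaution the paper leaves implicit. One harmless bookkeeping slip: $\bar\L_r^{n+1}$ itself is $O(r^{s})$, not $O(r^{s-1})$, because the $k=s$ term $\M^s\cdot(-E)^{n+1-s}$ need not vanish, and the factor $1/r$ enters only once (from $\mu(X,\F,rL)=\mu(X,\F,L)/r$, or equivalently from your rescaling), so that contribution is $O(r^{s-n-1})$ rather than $O(r^{s-n-2})$ --- still absorbed in the error term.
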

\begin{proof}
We fix $V:=L^n$. The general fibre of $(\B,\L_r)\to \A^1$ is $(X,rL)$, hence its volume is $r^nV$. Moreover we have $\mu(X,\F,rL) = \frac{1}{r}\mu^{[t]}_L$ and $\mu^{[t]}_L:= \frac{-K^{[t]}_{X,\F}\cdot L^{n-1}}{L^n}$. Using the fact that
\[
K^{[t]}_{\B/\PP^1}= \Pi^*p_1^*K^{[t]}_{X,\F}+ K^{[t]}_{\B/(X\times\PP^1)},
\]
we can decomose the mixed Donaldson--Futaki invariant as $\DF^{[t]}(\B,\F_{\B},\L_r)=\operatorname{Can}^{[t]}_r+\operatorname{Disc}^{[t]}_r$, where
\[
\operatorname{Disc}^{[t]}_r= \frac{1}{r^nV}K^{[t]}_{\B/(X\times\PP^1)} \cdot (r\M-E)^n
\]
and
\[
\operatorname{Can}^{[t]}_r=\frac{1}{r^nV}\left(\frac{n}{n+1}\frac{\mu^{[t]}_L}{r}(r\M-E)^{n+1}+\Pi^*p_1^*K^{[t]}_{X,\F}\cdot (r\M-E)^n\right).
\]

We first compute the discrepancy term. Expanding,
\[
(r\M-E)^n=\sum_{j=0}^n \binom nj r^j\M^j(-E)^{n-j}.
\]
Since $\Pi$ is an isomorphism over $(X\times \PP^1)\setminus Z$, and since $\F_{\B}$ is the saturated birational transform of the product foliation, both the ordinary relative canonical divisor and the foliated relative canonical divisor are supported on the $\Pi$-exceptional locus. Hence $K^{[t]}_{\B/(X\times \PP^1)}$ is $\Pi$-exceptional and its support maps into $Z=\operatorname{Supp}(\cO_{X\times\A^1}/\J)$. Since $\dim Z=s$, Lemma \ref{lem:dimension-vanishing-flag} implies that all terms with $j>s$ vanish. Hence
\[
\operatorname{Disc}^{[t]}_r=\frac{1}{r^nV}\left( \binom ns r^s\M^s(-E)^{n-s}\cdot K^{[t]}_{\B/(X\times\PP^1)} + O(r^{s-1}) \right).
\]
By the definition of the mixed $S$-coefficient, we obtain
\[
\operatorname{Disc}^{[t]}_r = \frac{\binom ns}{V} S^{[t]}_{(X,\F),L}(\J)r^{s-n} + O(r^{s-n-1}).
\]

It remains to show that the canonical part is $O(r^{s-n-1})$. Consider first $\frac{1}{r}(r\M-E)^{n+1}$. Expanding, we get
\[
\frac{1}{r}(r\M-E)^{n+1}= \sum_{j=0}^{n+1}\binom{n+1}{j}r^{j-1}\M^j(-E)^{n+1-j}.
\]
The term $j=n+1$ vanishes because $\M^{n+1}=0$, since $\M$ is pulled back from the $n$-dimensional variety $X$. Every remaining term contains at least one factor of $E$, hence is supported over $Z$. By Lemma \ref{lem:dimension-vanishing-flag}, all terms with $j>s$ vanish. Thus the largest possible power is $r^{s-1}$. After division by $r^nV$, this contributes $O(r^{s-n-1})$.

Now consider the second canonical term
\[
\Pi^*p_1^*K^{[t]}_{X,\F}\cdot (r\M-E)^n.
\]
Expanding, we get
\[
\Pi^*p_1^*K^{[t]}_{X,\F}\cdot (r\M-E)^n=\sum_{j=0}^n\binom nj r^j \Pi^*p_1^*K^{[t]}_{X,\F}\cdot \M^j(-E)^{n-j}.
\]
The term $j=n$ vanishes because it is the pullback of an$(n+1)$-fold intersection on $X$:
\[
\Pi^*p_1^*K^{[t]}_{X,\F}\cdot \M^n = \Pi^*p_1^*\left(K^{[t]}_{X,\F}\cdot L^n\right) = 0.
\]
For $j<n$, the term contains at least one factor of $E$, hence is supported over $Z$. If $j>s$, it vanishes by Lemma \ref{lem:dimension-vanishing-flag}. If $j=s$, it also vanishes, because it contains $s+1$ divisor classes pulled back from $X$, namely $K^{[t]}_{X,\F}$ and $s$ copies of $L$, intersected with a cycle mapping to a subset of $X$ of dimension at most $s$. Therefore the largest possible nonzero power is $r^{s-1}$. After division by $r^nV$, this term is also $O(r^{s-n-1})$.

Consequently $\operatorname{Can}^{[t]}_r=O(r^{s-n-1})$. Combining this with the expansion of $\operatorname{Disc}^{[t]}_r$ gives
\[
\DF^{[t]}(\B,\F_{\B},\L_r) = \frac{\binom ns}{L^n} S^{[t]}_{(X,\F),L}(\J)r^{s-n} + O(r^{s-n-1}).
\]
In particular, if $S^{[t]}_{(X,\F),L}(\J)<0$, then the mixed Donaldson--Futaki invariant is negative for all sufficiently large $r$. Hence $(X,\F,L)$ is not $t$-K-semistable.
\end{proof}

The following lemma will allow us to estimate the intersection numbers that appear in Proposition \ref{prop:mixed-S-leading-term}.

\begin{lemma}
\label{lem:relative-intersection-sign}
Let $G$ be an exceptional prime divisor on $\B$, and set
$Z_G:=\Pi(G)$. Then
\[
\M^s\cdot(-E)^{n-s}\cdot G=0
\]
if $\dim Z_G<s$, while
\[
\M^s\cdot(-E)^{n-s}\cdot G>0
\]
if $\dim Z_G=s$.
\end{lemma}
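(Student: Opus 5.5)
The plan is to reduce both assertions to intersection theory on the divisor $G$ itself. We regard $\M^s\cdot(-E)^{n-s}\cdot G$ as the intersection number $(\M|_G)^s\cdot(-E|_G)^{n-s}$ on the $n$-dimensional projective variety $G$ (compactified over $\PP^1$; since $G$ lies over $X\times\{0\}$, compactification does not affect it). The restriction $\pi_G:=\Pi|_G\colon G\to Z_G$ is a surjective morphism, and $\M|_G$ is the pullback of the ample class $p_1^*L|_{Z_G}$. The key observation is that $-E$ is $\Pi$-ample on $\B$: the normalised blow-up of $\J$ carries $\cO_\B(-E)=\J\cdot\cO_\B$, which is relatively ample for the blow-up, and finite normalisation preserves this. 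Hence $-E|_G$ is $\pi_G$-ample.

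\textbf{The vanishing case $\dim Z_G<s$.} Here $G$ is a cycle whose support maps into $Z_G\subset X\times\{0\}$, of dimension at most $s-1$. We apply Lemma \ref{lem:dimension-vanishing-flag} with $s$ replaced by $\dim Z_G$. It gives $\M^j\cdot G=0$ for $j>\dim Z_G$, in particular for $j=s$. Intersecting further with $(-E)^{n-s}$ then yields zero. Equivalently, $(\pi_G^*A)^s=0$ on $G$ whenever $A$ lives on a variety of dimension less than $s$.

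\textbf{The positivity case $\dim Z_G=s$.} Write $d:=\dim Z_G=s$, so the general fibre $F$ of $\pi_G$ has dimension $n-s$. By the projection formula we expect
\[
(\pi_G^*H)^s\cdot(-E|_G)^{n-s}=\deg\bigl(H|_{Z_G}\bigr)^s\cdot\bigl((-E)|_F\bigr)^{n-s},
\]
where $H:=p_1^*L$ restricted to $Z_G$. Concretely, replace $L$ by a multiple and cut $Z_G$ by $s$ general members of $|L|$. This gives finitely many reduced points, namely $(L|_{p_1(Z_G)})^s>0$ of them; note that $p_1$ restricted to $X\times\{0\}$ is an isomorphism, so $\dim p_1(Z_G)=s$. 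The pullback of these members to $G$ is then the sum of the fibres over those points, and these fibres have dimension $n-s$ because the points are general. On each such fibre $F$, the class $-E|_F$ is ample, since it is relatively ample over a point, so $((-E)|_F)^{n-s}>0$. Summing over the points gives strict positivity.

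\textbf{Main obstacle.} The main difficulty is justifying the relative ampleness of $-E$ after normalisation and compactification. The plan is to argue that $\B\to\operatorname{Bl}_\J$ is finite, so the pullback of the relatively ample tautological $\cO(1)$ remains $\Pi$-ample. A secondary point is ensuring that the general fibre of $\pi_G$ has the expected dimension $n-s$, which follows from generic flatness and the general choice of hyperplanes. Everything else is a routine application of the projection formula.
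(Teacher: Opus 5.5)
Your proposal is correct and follows essentially the same route as the paper: vanishing by the dimension count (general members of $|mL|$ pulled back to $\B$ miss $Z_G$), and positivity by cutting $G$ with $s$ general members of $|mL|$ to get a nonzero effective $(n-s)$-cycle supported in finitely many fibres of $\Pi$, on which the $\Pi$-ample class $-E$ has positive top self-intersection. Your remarks on why $-E$ stays $\Pi$-ample after the finite normalisation, and why the fibres of $\Pi|_G$ over general points of $Z_G$ have dimension $n-s$, fill in details the paper leaves implicit.
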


\begin{proof}
If $\dim Z_G<s$, then $s$ general divisors pulled back from $|mL|$ miss $Z_G$, so the intersection is zero.

Assume $\dim Z_G=s$. We then choose $s$ sufficiently general divisors $H_1,\ldots,H_s\in |mL|$. Their pullbacks meet $G$ in a nonzero effective cycle of dimension $n-s$, supported in finitely many fibres of $\Pi$. Since $-E$ is $\Pi$-ample on the flag-ideal blow-up, its restriction to those fibres is ample. Therefore the degree of $(-E)^{n-s}$ on this nonzero effective cycle is strictly positive.
\end{proof}

From now on we will write $K^{[t]}_{\B/(X\times\PP^1)} = \sum_i a_i^{[t]}G_i$ as a sum over exceptional prime divisors. The following proposition gives us a useful characterisation of $t$-K-instability in terms of the S-coefficient.

\begin{proposition}
\label{prop:mixed-odaka-negativity}
Assume that $a_i^{[t]}\le0$ for every $G_i$ with $\dim \Pi(G_i)=s$, and that at least one such coefficient is strictly negative. Then
\[
S^{[t]}_{(X,\F),L}(\J)<0.
\]
Consequently, $(X,\F,L)$ is not $t$-K-semistable.
\end{proposition}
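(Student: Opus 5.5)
The plan is to expand the mixed $S$-coefficient by linearity in the divisor $K^{[t]}_{\B/(X\times\PP^1)}=\sum_i a_i^{[t]}G_i$. Since this divisor is $\Pi$-exceptional, as observed in the proof of Proposition \ref{prop:mixed-S-leading-term}, the sum runs only over exceptional prime divisors $G_i$. We obtain
\[
S^{[t]}_{(X,\F),L}(\J)=\sum_i a_i^{[t]}\,\M^s\cdot(-E)^{n-s}\cdot G_i .
\]
The next step is to sort the exceptional divisors according to the dimension of their centres $Z_{G_i}=\Pi(G_i)$. Each such centre lies in $Z=\operatorname{Supp}(\cO_{X\times\A^1}/\J)$, so $\dim Z_{G_i}\le s$.

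For the divisors with $\dim Z_{G_i}<s$, Lemma \ref{lem:relative-intersection-sign} gives $\M^s\cdot(-E)^{n-s}\cdot G_i=0$. Hence these divisors contribute nothing, whatever the sign of $a_i^{[t]}$. This matters because the hypothesis says nothing about their coefficients.

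For the divisors with $\dim Z_{G_i}=s$, the same lemma gives $\M^s\cdot(-E)^{n-s}\cdot G_i>0$. By hypothesis $a_i^{[t]}\le 0$ for these, so every such term is $\le 0$. At least one of them has $a_i^{[t]}<0$, and that term is strictly negative. Summing, we get $S^{[t]}_{(X,\F),L}(\J)<0$.

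The final assertion, that $(X,\F,L)$ is not $t$-K-semistable, then follows immediately from the last statement of Proposition \ref{prop:mixed-S-leading-term}. That result uses the normal $\F$-compatible test configurations $(\B,\F_\B,\L_r)$ for $r\gg0$, which are test configurations by \cite[Lemma 7.12]{Pap26}.

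The only step needing care is Lemma \ref{lem:relative-intersection-sign}, which we take as given. Its key input is that $-E$ is $\Pi$-ample on the normalised blow-up. This holds because $\cO_\B(-E)=\J\cdot\cO_\B$ is relatively ample on the blow-up, and pullback under the finite normalisation map preserves relative ampleness. There is also a minor subtlety: the intersection numbers must be computed on the compactification over $\PP^1$. However, every $G_i$ lies over $X\times\{0\}$, so compactifying does not change them.
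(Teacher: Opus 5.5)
Your proposal is correct and follows essentially the same route as the paper's proof. You expand $S^{[t]}_{(X,\F),L}(\J)$ by linearity over the exceptional $G_i$, use Lemma \ref{lem:relative-intersection-sign} to discard the terms with $\dim\Pi(G_i)<s$ and to get strictly positive weights when $\dim\Pi(G_i)=s$, and conclude non-semistability from Proposition \ref{prop:mixed-S-leading-term}.
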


\begin{proof}
By definition,
\[
S^{[t]}_{(X,\F),L}(\J)= \sum_i a_i^{[t]}\left(\M^s\cdot(-E)^{n-s}\cdot G_i\right).
\]
If $\dim\Pi(G_i)<s$, the corresponding intersection number is zero by Lemma \ref{lem:relative-intersection-sign}. If $\dim\Pi(G_i)=s$, then the corresponding intersection number is strictly positive. Hence
\[
S^{[t]}_{(X,\F),L}(\J) = \sum_{\dim\Pi(G_i)=s} a_i^{[t]} \left(\M^s\cdot(-E)^{n-s}\cdot G_i\right).
\]
All summands are nonpositive since $a_i^{[t]}\le0$, and at least one is strictly negative, by assumption. Therefore
\[
S^{[t]}_{(X,\F),L}(\J)<0.
\]
The final statement follows from Proposition \ref{prop:mixed-S-leading-term}.
\end{proof}

\section{Singularity condition on K-stability for adjoint foliated structures}\label{sec: Odaka sing cond 1}

We now prove Theorem \ref{thm: intro main 1}. We first record the following technical Lemma that will allow us to take relative log canonical modifications of adjoint foliated structures.

\begin{lemma}
\label{lem:relative-mixed-lc-modification}
Let $(X,\F,t)$ be a normal $\Q$-Gorenstein algebraically integrable adjoint foliated structure with $0<t<1$. Assume that $(X,\F,t)$ is not log canonical. Then there exists a projective birational morphism $\pi:Y\to X$ such that, writing $\F_Y$ for the induced foliation and $E_{\pi}=\sum_i E_i$ for the reduced $\pi$-exceptional divisor, the following hold.
\begin{enumerate}
\item The adjoint foliated structure
\[
\left(Y,\F_Y,\sum_i\bigl((1-t)+t\varepsilon_{\F}(E_i)\bigr)E_i,
0,t\right)
\]
is log canonical.

\item The divisor
\[
K^{[t]}_{Y,\F_Y}+\sum_i\bigl((1-t)+t\varepsilon_{\F}(E_i)\bigr)E_i
\]
is $\pi$-ample.

\item Every $\pi$-exceptional prime divisor $E_i$ satisfies $A^{[t]}_{X,\F}(E_i)<0$.
\end{enumerate}
\end{lemma}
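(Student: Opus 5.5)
The plan is to imitate the classical construction of the log canonical modification (Odaka--Xu for pairs), replacing the ordinary MMP by the MMP for algebraically integrable adjoint foliated structures of \cite{CHLMSSX24, CHLMSSX25, CLSV26}.

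\textbf{Step 1: a modification.} First take a foliated log resolution $h\colon W\to X$. By the existence of foliated log resolutions for algebraically integrable foliations (ACSS), we may choose $W$ so that $(W,\F_W)$ is foliated log smooth, for instance toroidal or with a simple normal crossing exceptional locus adapted to $\F_W$. On $W$ put the boundary
\[
B_W:=\sum_j\bigl((1-t)+t\varepsilon_{\F}(F_j)\bigr)F_j ,
\]
summed over all $h$-exceptional primes $F_j$. Then $(W,\F_W,B_W,0,t)$ is log canonical: this is the standard fact that foliated log smooth structures with reduced-type boundary, with invariant divisors weighted by $1-t$, are lc. Writing the adjunction-type formula and using Lemma~\ref{lem:mixed-gauss-rees-coefficient}-style bookkeeping, we get
\[
K^{[t]}_{W,\F_W}+B_W=h^*K^{[t]}_{X,\F}+\sum_j A^{[t]}_{X,\F}(F_j)\,F_j .
\]
Here $A^{[t]}_{X,\F}(F_j)=t(a_j+\varepsilon_j)+(1-t)(a(F_j,X)+1)$, which is exactly the coefficient once the boundary weights are added to the discrepancies.

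\textbf{Step 2: run the relative MMP.} Next run a $(K^{[t]}_{W,\F_W}+B_W)$-MMP over $X$, and then pass to the relative lc model. Relative to $X$, this divisor is $\equiv_X \sum_j A_jF_j$ with $A_j:=A^{[t]}_{X,\F}(F_j)$. The MMP exists and terminates with a good minimal model by the results of \cite{CHLMSSX25, CLSV26} for lc algebraically integrable adjoint foliated structures with $t<1$. I expect this to be the main obstacle. Termination and abundance over $X$ must be justified; one can try to obtain them by writing $\sum A_jF_j=P-N$ with $P,N\ge0$ exceptional without common components. A negativity-lemma argument in the spirit of \cite{OX12} shows that the MMP contracts exactly the components where $A_j\ge0$, provided we first perturb by a small multiple of an exceptional anti-ample divisor so that the relevant divisor is big over $X$. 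If that route is problematic, I would instead extract the divisors via a qdlt/dlt modification as in \cite[Theorem 1.10]{CHLMSSX24} and then run the MMP with scaling.

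\textbf{Step 3: the output.} Let $Y$ be the relative lc model and $\pi\colon Y\to X$ the induced morphism. The lc property in (1) is preserved by MMP steps, which gives (1). The model is ample over $X$, which gives (2), once we check that no non-exceptional divisor is contracted. That check holds because $\pi$ is birational over $X$ and the divisor is pulled back from $X$ away from $\operatorname{Exc}$.

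For (3), take any surviving exceptional $E_i$ on $Y$ with $A_i\ge0$. Write $K^{[t]}_{Y,\F_Y}+B_Y\equiv_X\sum A_iE_i$, and set $D:=\sum_{A_i\ge0}A_iE_i$. The negativity lemma applied to the $\pi$-nef divisor $-\sum_{A_i<0}A_iE_i+\text{(ample)}$ forces $D=0$ after contracting. Concretely, the standard Odaka--Xu argument shows that divisors with nonnegative coefficient are in the relative stable base locus and get contracted. Hence every remaining exceptional prime has $A^{[t]}_{X,\F}(E_i)<0$.

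Finally, $Y\neq X$ because $X$ is not lc. If $\pi$ were an isomorphism, then $K^{[t]}_{X,\F}$ would be lc, which is a contradiction.
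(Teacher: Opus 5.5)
Your strategy is essentially the paper's. You put the boundary $\operatorname{Exc}^{\rm ninv}+(1-t)\operatorname{Exc}^{\rm inv}$ on an lc model over $X$, run the MMP over $X$, and pass to the relative ample model $Y$. There you use that $K_{\cA_Y}-\pi^*K^{[t]}_{X,\F}=\sum_i A^{[t]}_{X,\F}(E_i)E_i$ is $\pi$-ample and $\pi$-exceptional.

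The paper starts from the $\Q$-factorial qdlt modification of \cite[Theorem~1.9]{CHLMSSX24}, which is your fallback. It obtains the relative ample model by adding $\epsilon H$ with $H$ $h$-ample, invoking \cite[Theorem~8.1]{CHLMSSX25}, and letting $\epsilon\to0$ via finiteness of models \cite[Theorem~6.5]{CHLMSSX25}. Your foliated log resolution is an acceptable substitute, since $B_W=t\,\operatorname{Exc}^{\rm ninv}+(1-t)\operatorname{Exc}$ is a convex combination of an lc foliated boundary and an lc boundary. Your perturbation ``to make the divisor big over $X$'' does nothing, because every divisor is big over $X$ for a birational morphism. In the paper the perturbation is there only to meet the hypotheses of the termination theorem.

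The step that does not go through as written is the strict inequality in (3). The negativity lemma applied to the $\pi$-ample exceptional divisor $D^{[t]}_\pi:=\sum_i A_iE_i$ gives only $A_i\le 0$. So your $D=\sum_{A_i\ge0}A_iE_i$ vanishes without excluding surviving divisors with $A_i=0$. Your stable base locus argument covers only $A_j>0$: on $W$ the components with $A_j>0$ lie in the relative fixed part of $K_{\cA_W}$, but components with $A_j=0$ need not. For instance, those whose centre on $X$ is not contained in the non-lc locus can survive on a minimal model.

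What is missing is the ampleness argument the paper uses. If $A_i=0$, take a $\pi$-contracted curve $C$ through a general point of $E_i$. Then $C\not\subset\Supp(-D^{[t]}_\pi)$. Since $-D^{[t]}_\pi$ is an effective $\Q$-Cartier divisor, $D^{[t]}_\pi\cdot C\le 0$, contradicting $\pi$-ampleness. This formulation also avoids needing the individual $E_j$ to be $\Q$-Cartier on $Y$.

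Finally, drop the assertion that the MMP contracts \emph{exactly} the components with $A_j\ge0$; it is false in general, and (3) only needs the one inclusion. Already for ordinary pairs, take the cone over a smooth plane quintic and blow up a point of the exceptional curve of the minimal resolution. This gives a $(-1)$-curve with log discrepancy $-1$ that is $(K_W+\operatorname{Exc})$-negative, so the MMP contracts it.
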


\begin{proof}
We use the boundary notation for adjoint foliated structures as in \cite{CHLMSSX24,CHLMSSX25}. Thus, for $\cA=(X,\F,B,0,t)$ we write
\[
K_{\cA}:=K^{[t]}_{X,\F}+B.
\]

We fix $\cA=(X,\F,0,0,t)$, with $B=0$, so $K_{\cA}=K^{[t]}_{X,\F}$. We first apply the existence theorem for $\Q$-factorial qdlt modifications of algebraically integrable adjoint foliated structures with $t<1$, namely \cite[Theorem~1.9, equivalently Theorem~3.6]{CHLMSSX24}. This gives a projective birational morphism $h:X'\to X$ such that, if $\F'=h^{-1}\F$, and if
\[
B'=
\operatorname{Exc}(h)^{\rm ninv}+ (1-t)\operatorname{Exc}(h)^{\rm inv},
\]
where $\operatorname{Exc}(h)^{\rm ninv}$ and $\operatorname{Exc}(h)^{\rm inv}$ denote respectively the sums of the non-$\F'$-invariant and $\F'$-invariant $h$-exceptional prime divisors, then $\cA'=(X',\F',B',0,t)$ is $\Q$-factorial qdlt (and in particular it is lc). In particular, $X'$ is klt, and thus it is potentially klt. Moreover, the morphism $h$ extracts only non-klt places of the original adjoint foliated structure.

We now construct the relative log canonical model of $\cA'$ over $X$. Let $H$ be an $h$-ample $\R$-Cartier divisor on $X'$. For $0<\epsilon\ll 1$, the adjoint foliated structure $\cA'_\epsilon:=(X',\F',B',\epsilon H,t)$ is lc after choosing $H$ general in its numerical class, and $K_{\cA'}+\epsilon H$ is pseudo-effective over $X$. By \cite[Theorem 8.1]{CHLMSSX25}, any $(K_{\cA'}+\epsilon H)$-MMP over $X$ with scaling terminates with a good minimal model.

As $\epsilon$ varies in a sufficiently small rational polytope, \cite[Theorem 6.5]{CHLMSSX25} implies that only finitely many
good minimal models occur. Hence, after shrinking the interval
$0<\epsilon\ll1$, the good models stabilise. Let $Y$ be the resulting
stable model. Passing to the limit as $\epsilon\to0$, we obtain a good
minimal model of $K_{\cA'}$ over $X$. We now take its relative ample model, and denote it again by $Y$ (by abuse of notation), which gives us a projective birational morphism $\pi:Y\to X$. Let $\F_Y$ be the induced foliation on $Y$, and let $E_\pi=\sum_i E_i$ for the reduced $\pi$-exceptional divisor. Then, the pushforward adjoint foliated structure
\[
\cA_Y=\left(Y,\F_Y,\sum_i\bigl((1-t)+t\varepsilon_{\F}(E_i)\bigr)E_i,0,t\right)
\]
is lc, and $K_{\cA_Y}$ is $\pi$-ample. Equivalently,
\[
K^{[t]}_{Y,\F_Y} + \sum_i\bigl((1-t)+t\varepsilon_{\F}(E_i)\bigr)E_i
\]
is $\pi$-ample.

It remains to identify the discrepancies of the divisors which survive on $Y$. Let
\[
D_{\pi}^{[t]} := K^{[t]}_{Y,\F_Y} + \sum_i\bigl((1-t)+t\varepsilon_{\F}(E_i)\bigr)E_i - \pi^*K^{[t]}_{X,\F}
\]
which is a $\pi$-exceptional divisor. For each $\pi$-exceptional prime divisor $E_i$, the coefficient of $K^{[t]}_{Y,\F_Y}-\pi^*K^{[t]}_{X,\F}$ along $E_i$ is $(1-t)a(E_i,X)+t a(E_i,\F)$. Adding the boundary coefficient $(1-t)+t\varepsilon_{\F}(E_i)$ gives
\[
(1-t)\bigl(a(E_i,X)+1\bigr) + t\bigl(a(E_i,\F)+\varepsilon_{\F}(E_i)\bigr) = A^{[t]}_{X,\F}(E_i).
\]
Hence $D_{\pi}^{[t]}= \sum_i A^{[t]}_{X,\F}(E_i)E_i$.

Since
\[
K^{[t]}_{Y,\F_Y}+ \sum_i\bigl((1-t)+t\varepsilon_{\F}(E_i)\bigr)E_i
\]
is $\pi$-ample and $\pi^*K^{[t]}_{X,\F}$ is numerically trivial over $X$, the divisor $D_{\pi}^{[t]}$ is $\pi$-ample and $\pi$-exceptional. By the negativity lemma, its coefficients are non-positive.

We now show that they are in fact strictly negative. Suppose that some $\pi$-exceptional prime divisor $E_i$ had coefficient zero in $D_{\pi}^{[t]}$. Since $D_{\pi}^{[t]}$ is exceptional and has non-positive coefficients, we may choose a curve $C$ contracted by $\pi$, moving in $E_i$, and avoiding the other exceptional components. Then $D_{\pi}^{[t]}\cdot C=0$ contradicting the $\pi$-ampleness of $D_{\pi}^{[t]}$. Therefore every $\pi$-exceptional prime divisor appears in $D_{\pi}^{[t]}$ with strictly negative coefficient. Consequently $A^{[t]}_{X,\F}(E_i)<0$ for every $\pi$-exceptional prime divisor $E_i$, as required.
\end{proof}

\begin{theorem}
\label{thm:mixed-odaka-singularity-obstruction-normal}
Let $(X,\F,t)$ be a normal projective $\Q$-Gorenstein adjoint foliated structure with $\F$ algebraically integrable and $0<t<1$. Let $L$ be an ample $\Q$-Cartier divisor on $X$. If $(X,\F,L)$ is $t$-K-semistable, then $(X,\F,t)$ is log canonical.
\end{theorem}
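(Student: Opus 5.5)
We argue by contraposition, following Odaka's strategy: assuming $(X,\F,t)$ is not log canonical, we construct a flag ideal $\J$ whose normalised blow-up $\B$ satisfies the hypotheses of Proposition \ref{prop:mixed-odaka-negativity}. That proposition then gives $S^{[t]}_{(X,\F),L}(\J)<0$, and hence $t$-K-instability.

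\textbf{Step 1: the modification and the ideal.} Apply Lemma \ref{lem:relative-mixed-lc-modification} to obtain $\pi:Y\to X$, all of whose exceptional divisors $E_i$ satisfy $A^{[t]}_{X,\F}(E_i)<0$. Choose an ideal $I\subset\cO_X$ whose normalised blow-up is dominated by $Y$, or equals it. Concretely, since $D^{[t]}_\pi=\sum_i A^{[t]}_{X,\F}(E_i)E_i$ is $\pi$-ample and exceptional, we may take $I=\pi_*\cO_Y(mD^{[t]}_\pi)$ for $m$ divisible. Then $Y$ is the normalised blow-up of $I$, and $I\cdot\cO_Y=\cO_Y(mD^{[t]}_\pi)$, so that $\ord_{E_i}(I)=-mA^{[t]}_{X,\F}(E_i)>0$. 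Set $\J:=I+(s^N)$ for $N\gg0$, or more simply $\J=I+(s)$; either is a flag ideal cosupported on $Z=\operatorname{Supp}(\cO_X/I)\times\{0\}$.

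\textbf{Step 2: the top-dimensional exceptional divisors.} Let $s=\dim Z$. The exceptional prime divisors $G$ of $\B=\operatorname{Bl}_\J(X\times\A^1)^\nu$ with $\dim\Pi(G)=s$ are those lying over the generic points of the $s$-dimensional components of $\operatorname{Supp}(\cO_X/I)$. These correspond to Rees-type valuations $w$ with $w|_{K(X)}=c\,\ord_{E_i}$ for some $E_i$ whose centre is an $s$-dimensional component, and with $r=w(s)$ determined by $\J$. By Lemma \ref{lem:mixed-gauss-rees-coefficient},
\[
\operatorname{coeff}_G K^{[t]}_{\B/(X\times\PP^1)}=c\,A^{[t]}_{X,\F}(E_i)+(1-t)(r-1),
\]
using $A^{[t]}_{X,\F}(c\,\ord_{E_i})=c\,A^{[t]}_{X,\F}(E_i)$. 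The task is to make this negative for every such $G$. The first term is at most $-c\cdot\delta$ with $\delta:=\min_i|A^{[t]}_{X,\F}(E_i)|>0$.

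\textbf{Step 3: controlling $r$ (main obstacle).} We need $r$ small relative to $c$. With $\J=I^{k}+(s)$ the Rees valuations are essentially $w=\min\{\text{weight on }I^k,\ \text{weight on } s\}$ normalised so that $w(I^k)=w(s)=r$. Such a $w$ has $w|_{K(X)}=c\,\ord_{E_i}$ with $c\cdot k\cdot\ord_{E_i}(I)=r$. Hence
\[
(1-t)(r-1)\le (1-t)\,r=(1-t)\,c\,k\,m\,|A^{[t]}_{X,\F}(E_i)|.
\]
This bound is of the wrong shape, so we should instead use $\J=I+(s^N)$ with $N$ large. Then $r=N\cdot c\cdot\ord_{E_i}(I)/N'$, and more cleanly, following Odaka, rescale so that $w(s)=1$. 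For $\J=I+(s^N)$ the Rees valuations are given by $w(s)=r$ and $w(I)=Nr$, that is, $c=Nr/\ord_{E_i}(I)$. Then
\[
c\,A^{[t]}_{X,\F}(E_i)+(1-t)(r-1)=r\Bigl(\frac{N\,A^{[t]}_{X,\F}(E_i)}{\ord_{E_i}(I)}+1-t\Bigr)-(1-t),
\]
and this is negative once $N>(1-t)\,m$, since $\ord_{E_i}(I)=-mA^{[t]}_{X,\F}(E_i)$. The delicate points are the following. First, the Rees valuations of the flag ideal with $s$-dimensional centre must be exactly these quasi-monomial (Gauss-extension) valuations attached to the Rees valuations $\ord_{E_i}$ of $I$; this requires that $Y$ is the normalised blow-up of $I$, so that the Rees valuations of $I$ are precisely the $E_i$. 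Second, $\ord_{E_i}(I)$ must scale uniformly with $A^{[t]}_{X,\F}(E_i)$, which holds by construction of $I$. I expect verifying this description of the top-dimensional exceptional divisors of $\B$ to be the main technical step.

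\textbf{Step 4: conclusion.} With all coefficients $a_i^{[t]}$ along the $s$-dimensional-centre divisors strictly negative, Proposition \ref{prop:mixed-odaka-negativity} yields $S^{[t]}_{(X,\F),L}(\J)<0$. By Proposition \ref{prop:mixed-S-leading-term}, $\DF^{[t]}(\B,\F_\B,\L_r)<0$ for $r\gg0$. Here $\F_\B$ is the saturated transform of the product foliation; it is algebraically integrable, and the test configuration is $\F$-compatible by \cite[Lemma 7.12]{Pap26}. This contradicts $t$-K-semistability, so $(X,\F,t)$ must be log canonical.
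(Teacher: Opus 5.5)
Your proposal is correct and follows the paper's route. Both arguments go by contrapositive through Lemma~\ref{lem:relative-mixed-lc-modification}, the ideal $I=\pi_*\cO_Y(mD^{[t]}_\pi)$ with normalised blow-up $Y$, the flag ideal $I+(s^N)$, Lemma~\ref{lem:mixed-gauss-rees-coefficient} on the top-dimensional Rees divisors, and Propositions~\ref{prop:mixed-odaka-negativity} and~\ref{prop:mixed-S-leading-term}. The only difference is how the $(1-t)(r-1)$ term is handled: the paper takes the exponent divisible by all $b_i=\ord_{E_i}(I)$, so that $\ord_G(s)=1$ and the term vanishes, while you allow any $r\ge 1$ and absorb it by taking $N>(1-t)m$. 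For the same reason you should drop the $\J=I+(s)$ option from Step~1, since it can give positive coefficients once $(1-t)m>1$.
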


\begin{proof}
We prove the contrapositive. Assume that $(X,\F,t)$ is not log canonical. Let $\pi:Y\to X$ be the relative mixed lc modification given by Lemma~\ref{lem:relative-mixed-lc-modification}, with $E_{\pi}=\sum_i E_i$ for the reduced exceptional divisor. By Lemma \ref{lem:relative-mixed-lc-modification}, every $\pi$-exceptional prime divisor satisfies $A^{[t]}_{X,\F}(E_i)<0$.

Since
\[
D_{\pi}^{[t]} := K^{[t]}_{Y,\F_Y} +\sum_i\bigl((1-t)+t\varepsilon_{\F}(E_i)\bigr)E_i-\pi^*K^{[t]}_{X,\F}= \sum_i A^{[t]}_{X,\F}(E_i)E_i
\]
is $\pi$-ample and exceptional with strictly negative coefficients, the divisor $A:=-D_{\pi}^{[t]}$ is effective and $-A$ is $\pi$-ample.

We now choose $m>0$ sufficiently divisible so that $mA$ is Cartier and the relative algebra
\[
\bigoplus_{k\ge0}\pi_*\cO_Y(-kmA)
\]
is generated in degree one. We now define the coherent ideal sheaf on $X$, $I:=\pi_*\cO_Y(-mA)\subset \cO_X$. Since $-mA$ is $\pi$-ample and the relative algebra $\bigoplus_{k\ge 0}\pi_*\cO_Y(-kmA)$ is generated in degree one, we have
\[
Y\simeq \operatorname{Proj}_X \bigoplus_{k\ge 0} I^k.
\]
Equivalently, $Y$ is the normalised blow-up of $I$. Hence the Rees valuations of $I$, at the generic points of the maximal-dimensional components of $\operatorname{Supp}(\cO_X/I)$, are precisely the valuations $\operatorname{ord}_{E_i}$ for the exceptional divisors $E_i$ lying over those components.

For every such relevant divisor $E_i$, we let $b_i:=\ord_{E_i}(I)>0$. After increasing $m$ if necessary, we may assume that all the $b_i$ are positive integers. We now choose $q\gg 0$ divisible by all the $b_i$ corresponding to the Rees divisors over the maximal-dimensional components of $\Supp(\cO_X/I)$. Let $\tau$ denote the coordinate on $\A^1$. We further choose $N\gg0$ and define the flag ideal
\[
\J:=\overline{(I+(\tau^q))^N} \subset \cO_{X\times\A^1},
\]
and let
\[
\Pi:\B:=\operatorname{Bl}_{\J}(X\times\A^1)^\nu \longrightarrow X\times\A^1
\]
be the normalised blow-up. Let also $\F_{\B}$ be the saturated birational transform of the product foliation. Let $G_\alpha$ be an exceptional divisor of $\Pi$ dominating a maximal-dimensional component of $\operatorname{Supp} \left(\cO_{X\times\A^1}/\J
\right)$.

We claim that for every such $G_\alpha$ of $\Pi$ there exists an index $i(\alpha)$, with $E_{i(\alpha)}$ lying over the corresponding maximal-dimensional component of $\Supp(\cO_X/I)$, such that $\ord_{G_\alpha}(\tau)=1$ and $\ord_{G_\alpha}|_{K(X)}
=
c_\alpha\ord_{E_{i(\alpha)}}$ for some $c_\alpha>0$.

This claim follows from the standard Rees-valuation calculation for the flag ideal $I+(\tau^q)$, which we recall for clarity. The assertion is local at the generic point of the relevant component. There, the top Rees valuations of $I$ are the valuations $\ord_{E_i}$. For a divisor over $X\times \A^1$ associated to $E_i$, we have as before $\ord_G|_{K(X)}=c\,\ord_{E_i}$ and $\ord_G(\tau)=r$. The two summands $I$ and $(\tau^q)$ have the same value along a Rees valuation of $I+(\tau^q)$, so $c\,\ord_{E_i}(I)=q\,r$. Equivalently, $c\,b_i=q\,r$. Because $q$ was chosen divisible by $b_i$, the primitive integral valuation corresponding to the associated exceptional divisor is obtained by taking $r=1$ and $c=\frac{q}{b_i}$. Thus, for every such $G_\alpha$, we have that $\ord_{G_\alpha}(\tau)=1$ and $ord_{G_\alpha}|_{K(X)} = \frac{q}{b_{i(\alpha)}}\ord_{E_{i(\alpha)}}$.

Raising $I+(\tau^q)$ to the $N$-th power and taking integral closure does not change the normalised blow-up or its Rees valuations, so the same conclusion holds for $\J=\overline{(I+(\tau^q))^N}$.

We now apply Lemma~\ref{lem:mixed-gauss-rees-coefficient}. Since $\ord_{G_\alpha}(\tau)=1$, the $(r-1)$-term in that lemma vanishes. Hence
\[
\operatorname{coeff}_{G_\alpha} K^{[t]}_{\B/(X\times\A^1)} = A^{[t]}_{X,\F} \left( \ord_{G_\alpha}|_{K(X)} \right).
\]
Using $\ord_{G_\alpha}|_{K(X)} = c_\alpha\ord_{E_{i(\alpha)}}$ and the homogeneity of mixed log discrepancy in the valuation, we get
\[
\operatorname{coeff}_{G_\alpha} K^{[t]}_{\B/(X\times\A^1)} =c_\alpha A^{[t]}_{X,\F}(E_{i(\alpha)}).
\]
Since every $A^{[t]}_{X,\F}(E_i)$ is strictly negative, every exceptional divisor $G_\alpha$ dominating a maximal-dimensional component of $\Supp(\cO_{X\times\A^1}/\J)$ has strictly negative coefficient in $K^{[t]}_{\B/(X\times\A^1)}$. The exceptional divisors $G_\alpha$ considered above are exactly the exceptional prime divisors $G_i$ whose image has dimension $s=\dim \operatorname{Supp}(\cO_{X\times \A^1}/\mathcal J)$. Furthermore, as in \cite[Lemma 7.12]{Pap26} $(\B,\F_\B,\L_r)$ is an $\F$-compatible (semi-) ample test configuration. Therefore, all the conditions of Proposition \ref{prop:mixed-odaka-negativity} are met, so we obtain
\[
S^{[t]}_{(X,\F),L}(\J)<0.
\]
Thus by Proposition \ref{prop:mixed-S-leading-term} $(X,\F,L)$ is not $t$-K-semistable, proving the contrapositive.
\end{proof}

\section{K-semistability characterisation for Calabi--Yau, general type and Fano adjoint foliated structures}\label{sec: odaka sing cond 2}

We now turn to the converse direction in the Calabi--Yau and general type cases.

\subsection{Reducing arbitrary test configurations to product birational models}
We begin by showing that starting with an arbitrary $\F$-compatible test configuration $\X$, we can find a common $\G_m$ equivariant model over $\overline{\X}$ and $X\times \PP^1$.

\begin{lemma}
\label{lem:common-model-f-compatible}
Let $(\X,\F_{\X},\L)\to \A^1$ be a normal $\F$-compatible test configuration for $(X,\F,L)$, and let $\overline{\X}\to\PP^1$ be its compactification. Then there exists a smooth projective $\G_m$-equivariant variety $\W$, together with birational morphisms
\[
\Theta:\W\to \overline{\X},
\qquad
\Pi:\W\to X\times\PP^1,
\]
such that both morphisms are isomorphisms over $\PP^1\setminus\{0\}$. Moreover, the saturated birational transform $\F_{\W}$ of
$\F_{\overline{\X}}$ coincides with the saturated birational transform of the product foliation on $X\times\PP^1$.
\end{lemma}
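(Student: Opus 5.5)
The plan is to build $\W$ as a resolution of the graph of the natural $\G_m$-equivariant birational map between $X\times\PP^1$ and $\overline{\X}$. By condition (1) of Definition \ref{def: foliated t.c.}, there is a $\G_m$-equivariant isomorphism
\[
\overline{\X}|_{\PP^1\setminus\{0\}}\simeq X\times(\PP^1\setminus\{0\}).
\]
On $\A^1\setminus\{0\}$ this is the given trivialisation, and over $\infty$ it is the gluing used to define the compactification. This isomorphism defines a $\G_m$-equivariant birational map
\[
\phi: X\times\PP^1\dashrightarrow\overline{\X}
\]
which is an isomorphism over $\PP^1\setminus\{0\}$. Let $\Gamma\subset X\times\PP^1\times_{\PP^1}\overline{\X}$ be the closure of the graph of $\phi$. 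It is projective, since both factors are projective over $\PP^1$. It is $\G_m$-invariant, since $\phi$ is equivariant and $\G_m$ is connected. Both projections from $\Gamma$ are isomorphisms over $\PP^1\setminus\{0\}$, because there the graph is already closed and is the graph of an isomorphism.

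Next I take $\W\to\Gamma$ to be a $\G_m$-equivariant resolution of singularities, using a functorial resolution (Hironaka, or Bierstone--Milman / Villamayor). Functoriality makes it equivariant under the connected group $\G_m$, and it is an isomorphism over the smooth locus of $\Gamma$. The maps $\Theta$ and $\Pi$ are the compositions of $\W\to\Gamma$ with the two projections. They are birational, and they remain isomorphisms over $\PP^1\setminus\{0\}$ provided the resolution does not modify anything there.

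\textbf{Main obstacle.} The difficulty is to reconcile \emph{smooth} with \emph{isomorphism over $\PP^1\setminus\{0\}$}. Over $\PP^1\setminus\{0\}$ we have $\Gamma\simeq X\times(\PP^1\setminus\{0\})$, and this is smooth only if $X$ is. In that case the functorial resolution is an isomorphism over $\PP^1\setminus\{0\}$, since it only blows up centres inside $\operatorname{Sing}(\Gamma)$, which lies in the central fibre. For singular $X$, I would first try an equivariant log resolution of $(\Gamma,\Gamma_0)$ whose centres lie over the central fibre $\Gamma_0$. This is possible because $\operatorname{Sing}(\Gamma)\cap\pi^{-1}(\PP^1\setminus\{0\})=\operatorname{Sing}(X)\times(\PP^1\setminus\{0\})$ is a product, so its resolution can be taken to be a product $X'\times\PP^1$ and then composed with $\Gamma$. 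However, this changes the general fibre. So literally both requirements hold only for $X$ smooth, or after reading ``smooth'' as smooth along the exceptional locus over $0$. I expect this to be the step where the argument must be stated carefully. For the later intersection-theoretic use only normality and the isomorphism over $\PP^1\setminus\{0\}$ matter, so I would record precisely which version is proved.

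Finally, for the foliation statement, let $\F_1$ and $\F_2$ be the saturated birational transforms in $T_{\W/\PP^1}$ of $\F_{\overline{\X}}$ and of $p_1^{-1}\F$ respectively. Over $\PP^1\setminus\{0\}$ both maps are isomorphisms, and by condition (3) of Definition \ref{def: foliated t.c.} the foliation $\F_{\overline{\X}}$ there is the product foliation. Hence $\F_1$ and $\F_2$ agree on a dense open subset, so they have the same generic stalk. A saturated subsheaf of the torsion-free sheaf $T_{\W/\PP^1}$ is determined by its generic stalk, namely as the kernel of $T_{\W/\PP^1}\to (T_{\W/\PP^1})_\eta/(\F_i)_\eta$. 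Therefore $\F_1=\F_2$. This sheaf is integrable and algebraically integrable because it is so generically, and it is $\G_m$-equivariant because both maps are.
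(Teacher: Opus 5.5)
Your construction is the paper's: the closure of the graph of the equivariant birational map $X\times\PP^1\dashrightarrow\overline{\X}$, then an equivariant resolution, then uniqueness of the saturated extension of the foliation. Your description of a saturated subsheaf as the kernel of $T_{\W/\PP^1}\to (T_{\W/\PP^1})_\eta/(\F_i)_\eta$ is a clean way to make the paper's ``saturation is unique'' precise.

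The obstacle you flag is genuine, and it is a defect of the statement rather than of your argument. If $\W$ is smooth and $\Pi$ is an isomorphism over $\PP^1\setminus\{0\}$, then $X\times(\PP^1\setminus\{0\})$ is isomorphic to an open subset of $\W$, so $X$ is smooth. The paper's proof resolves the normalised graph $\Gamma$ and asserts that both maps stay isomorphisms away from $0$. That fails whenever $X$ is singular, which is exactly the generality in which the lemma is later used.

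The repair is to take $\W:=\Gamma$, the normalised graph closure, itself. It is normal, projective and $\G_m$-equivariant. Both projections are isomorphisms over $\PP^1\setminus\{0\}$, because $X\times(\PP^1\setminus\{0\})$ is already normal, so normalisation changes nothing there. Everything downstream needs only normality plus the isomorphism away from the central fibre:
\begin{enumerate}
\item the projection formula in Lemma \ref{lem:birational-invariance-mixed-df};
\item the coefficient computation in Lemma \ref{lem:mixed-gauss-rees-coefficient};
\item the effectivity in Corollary \ref{cor:mixed-relative-effective};
\item passing to a model dominating the flag-ideal blow-up in Lemma \ref{lem:line-bundle-comparison-flag}.
\end{enumerate}
So that is the version you should record.

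You can drop the two alternatives you float. Resolving with all centres over $\Gamma_0$ is impossible for singular $X$, as you observe. ``Smooth along the exceptional locus over $0$'' is not achievable in general either. For example, in the deformation to the normal cone of an isolated cone singularity, the closure of $\operatorname{Sing}(X)\times(\PP^1\setminus\{0\})$ meets the exceptional divisor, and $\W$ is singular there. Neither alternative is needed.
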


\begin{proof}
The test configuration gives a $\G_m$-equivariant birational map $X\times\PP^1\dashrightarrow \overline{\X}$ which is an isomorphism over $\PP^1\setminus\{0\}$. Let $\Gamma$ be the normalisation of the closure of its graph. Then $\Gamma$ comes with $\G_m$-equivariant birational morphisms
\[
\Gamma\to \overline{\X}, \qquad \Gamma\to X\times\PP^1.
\]
By the equivariant resolution of singularities, we can choose a smooth $\G_m$-equivariant resolution $\W\to\Gamma$. This gives the required morphisms $\Theta$ and $\Pi$.

Over $\PP^1\setminus\{0\}$, both $(\overline{\X},\F_{\overline{\X}})$ and $(X\times\PP^1,p_1^{-1}\F)$ are identified with the product family. Therefore the two induced foliations on the common open set agree. Their saturated extensions to $\W$ agree because saturation inside $T_{\W/\PP^1}$ is unique.
\end{proof}

We now show that the mixed Donaldson--Futaki invariant remains unchanged after taking the common model $\W$.

\begin{lemma}
\label{lem:birational-invariance-mixed-df}
Let $\cN:=\Theta^*\overline{\L}$. Then
\[
\DF^{[t]}(\W,\F_{\W},\cN) = \DF^{[t]}(\overline{\X}, \F_{\overline{\X}},\overline{\L}).
\]
\end{lemma}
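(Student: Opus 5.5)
The plan is to check that each of the two intersection numbers in Definition \ref{def:DFwt-corrected} is unchanged when we replace $(\overline{\X},\overline{\L})$ by $(\W,\cN)$, using the projection formula for the birational morphism $\Theta$. Two quantities need no work. The slope $\mu(X,\F,L)$ and the volume $V=L^n$ depend only on the general fibre, and $\Theta$ is an isomorphism over $\PP^1\setminus\{0\}$, so the general fibre of $\W\to\PP^1$ is still $(X,L)$ with the foliation $\F$. For the leading term, $\Theta$ is birational and $\cN=\Theta^*\overline{\L}$, so the projection formula gives $\cN^{n+1}=\overline{\L}^{n+1}$.

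The real content is the canonical term. We write
\[
K^{[t]}_{\W/\PP^1}=\Theta^*K^{[t]}_{\overline{\X}/\PP^1}+\Delta,\qquad \Delta:=(1-t)\bigl(K_{\W}-\Theta^*K_{\overline{\X}}\bigr)+t\bigl(K_{\F_{\W}}-\Theta^*K_{\F_{\overline{\X}}}\bigr).
\]
This uses that the relative canonical classes differ from the absolute ones by the same pullback of $K_{\PP^1}$. It also needs $K^{[t]}_{\overline{\X}/\PP^1}$ to be $\Q$-Cartier for the pullback to make sense. If it is not, I would instead use Mumford pullback, or argue via pushforward as below; the projection formula $\Theta_*(\Theta^*D\cdot\cN^n)=D\cdot\overline{\L}^n$ only needs $\cN$ to be a pullback of a Cartier class. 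The cleanest route is therefore to write $K^{[t]}_{\W/\PP^1}\cdot\Theta^*\overline{\L}^n=\Theta_*K^{[t]}_{\W/\PP^1}\cdot\overline{\L}^n$ directly, which avoids the $\Q$-Cartier issue on $\overline{\X}$.

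It then suffices to show that $\Theta_*K^{[t]}_{\W/\PP^1}=K^{[t]}_{\overline{\X}/\PP^1}$ as Weil divisor classes, i.e. that $\Delta$ is $\Theta$-exceptional. Both $\W$ and $\overline{\X}$ are normal and $\Theta$ is an isomorphism at the generic point of every non-exceptional divisor. There the ordinary canonical divisors agree. By Lemma \ref{lem:common-model-f-compatible}, $\F_{\W}$ is the saturated birational transform of $\F_{\overline{\X}}$, so the foliated canonical divisors also agree in codimension one away from the exceptional locus. Hence $\Theta_*K_{\W}=K_{\overline{\X}}$ and $\Theta_*K_{\F_{\W}}=K_{\F_{\overline{\X}}}$, and these relations persist for the relative versions and their $t$-combination. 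Finally, any $\Theta$-exceptional divisor $G$ satisfies $G\cdot\Theta^*\overline{\L}^n=\Theta_*G\cdot\overline{\L}^n=0$, so $\Delta$ contributes nothing to the intersection number.

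The main obstacle is the pushforward identity for foliated canonical divisors. One has to make sure that the saturated transform on the normal model $\overline{\X}$ agrees with the given $\F_{\overline{\X}}$ at codimension-one points. This follows from uniqueness of saturation in $T_{\overline{\X}/\PP^1}$ together with the fact that $\Theta$ is an isomorphism near the generic point of each non-exceptional divisor, including components of the central fibre of $\overline{\X}$. It is also worth recording that $\cN$ is only semiample and big rather than ample, so $(\W,\cN)$ is a semi-ample test configuration in the sense used earlier with \cite[Lemma 7.12]{Pap26}. The formula for $\DF^{[t]}$ still makes sense for it, since it only involves intersection numbers.
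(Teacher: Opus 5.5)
Your proposal is correct and follows essentially the same argument as the paper. The slope, the volume and $\overline{\L}^{\,n+1}$ are unchanged, and the difference between $K^{[t]}_{\W/\PP^1}$ and the transform of $K^{[t]}_{\overline{\X}/\PP^1}$ is $\Theta$-exceptional, so the projection formula shows it is killed by $(\Theta^*\overline{\L})^n$. Your pushforward formulation $\Theta_*K^{[t]}_{\W/\PP^1}=K^{[t]}_{\overline{\X}/\PP^1}$ is slightly more careful than the paper's pullback $\Theta^*K^{[t]}_{\overline{\X}/\PP^1}$, since the latter tacitly assumes this class is $\Q$-Cartier on $\overline{\X}$.
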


\begin{proof}
Since $\cN=\Theta^*\overline{\L}$, we have $\cN^{n+1}=\overline{\L}^{n+1}$. Moreover,
\[
K_{\W/\PP^1} = \Theta^*K_{\overline{\X}/\PP^1}+R_X
\]
and
\[
K_{\F_{\W}} = \Theta^*K_{\F_{\overline{\X}}}+R_{\F},
\]
where $R_X$ and $R_{\F}$ are $\Theta$-exceptional. Hence
\[
K^{[t]}_{\W/\PP^1}=\Theta^*K^{[t]}_{\overline{\X}/\PP^1}+ R^{[t]},
\]
where $R^{[t]}:=(1-t)R_X+tR_{\F}$ is $\Theta$-exceptional. Therefore, by the projection formula,
\[
K^{[t]}_{\W/\PP^1}\cdot\cN^n=K^{[t]}_{\overline{\X}/\PP^1}\cdot\overline{\L}^{\,n},
\]
because $R^{[t]}\cdot\Theta^*\overline{\L}^{\,n}=0$. The slope and volume are unchanged since the general fibre is still $(X,\F,L)$. Substituting into the definition of the mixed Donaldson--Futaki invariant gives the desired equality.
\end{proof}

The following lemma rewrites the mixed Donaldson--Futaki invariant in terms of intersection numbers from specific divisors on the common model $\W$.

\begin{lemma}
\label{lem:common-model-odaka-decomposition}
Let $\M:=\Pi^*p_1^*L$ and $\cN:=\Theta^*\overline{\L}$. Then
\[
\DF^{[t]}(\W,\F_{\W},\cN)= \operatorname{Can}^{[t]}(\W,\cN) + \operatorname{Disc}^{[t]}(\W,\cN),
\]
where
\[
\operatorname{Can}^{[t]}(\W,\cN):=\frac{1}{V}
\left(\frac{n}{n+1}\mu(X,\F,L)\cN^{n+1}+ \Pi^*p_1^*K^{[t]}_{X,\F}\cdot\cN^n\right),
\]
and
\[
\operatorname{Disc}^{[t]}(\W,\cN):=\frac{1}{V}K^{[t]}_{\W/(X\times\PP^1)}\cdot \cN^n.
\]
\end{lemma}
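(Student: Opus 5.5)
The decomposition is a formal splitting of the relative mixed canonical divisor on $\W$, followed by linearity of intersection numbers. By Lemma \ref{lem:birational-invariance-mixed-df} and the definition of the mixed Donaldson--Futaki invariant, we have
\[
\DF^{[t]}(\W,\F_{\W},\cN)=\frac{1}{V}\left(\frac{n}{n+1}\mu(X,\F,L)\cN^{n+1}+K^{[t]}_{\W/\PP^1}\cdot\cN^n\right),
\]
where $V=L^n$ and $\mu(X,\F,L)$ are computed on the general fibre. The general fibre of $\W\to\PP^1$ is still $(X,\F,L)$ because $\Theta$ is an isomorphism over $\PP^1\setminus\{0\}$. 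So it suffices to establish the identity of $\Q$-Cartier divisor classes on $\W$
\[
K^{[t]}_{\W/\PP^1}=\Pi^*p_1^*K^{[t]}_{X,\F}+K^{[t]}_{\W/(X\times\PP^1)},
\]
and then intersect both sides with $\cN^n$.

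To prove this identity, I would treat the ambient and foliated parts separately. For the ambient part, $K_{X\times\PP^1/\PP^1}=p_1^*K_X$, so
\[
K_{\W/\PP^1}=\Pi^*K_{X\times\PP^1/\PP^1}+K_{\W/(X\times\PP^1)}=\Pi^*p_1^*K_X+K_{\W/(X\times\PP^1)}.
\]
For the foliated part, Lemma \ref{lem:common-model-f-compatible} says $\F_{\W}$ is the saturated transform of the product foliation $p_1^{-1}\F$. The relative foliated discrepancy is by definition $K_{\F_{\W}}-\Pi^*p_1^*K_{\F}$, in the compactified version of the definition preceding Lemma \ref{lem:mixed-gauss-rees-coefficient}, so $K_{\F_{\W}}=\Pi^*p_1^*K_{\F}+(K_{\F_{\W}}-\Pi^*p_1^*K_{\F})$ tautologically. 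The one point to check is that $K_{p_1^{-1}\F}=p_1^*K_{\F}$ on $X\times\PP^1$. This holds because $T_{p_1^{-1}\F}=p_1^*T_{\F}$ as a subsheaf of $T_{X\times\PP^1/\PP^1}=p_1^*T_X$, hence $\det$ commutes with pullback. Taking the combination with weights $(1-t)$ and $t$ gives the displayed identity. Here we use that $K^{[t]}_{X,\F}$ is $\Q$-Cartier, so its pullback makes sense, and that $\W$ is smooth, so all the other terms are $\Q$-Cartier.

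Substituting into the formula above and regrouping gives exactly the stated $\operatorname{Can}^{[t]}+\operatorname{Disc}^{[t]}$. There is no real obstacle. The only care needed is bookkeeping over $\PP^1$ rather than $\A^1$: over $\infty$ both $\Theta$ and $\Pi$ are isomorphisms, so $K^{[t]}_{\W/(X\times\PP^1)}$ is supported over $0$, consistent with the $\A^1$ definition.
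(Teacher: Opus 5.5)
Your proposal is correct and follows the paper's proof: substitute $K^{[t]}_{\W/\PP^1}=\Pi^*p_1^*K^{[t]}_{X,\F}+K^{[t]}_{\W/(X\times\PP^1)}$ into the definition of $\DF^{[t]}$ and regroup. The only difference is that you spell out the ambient identity $K_{X\times\PP^1/\PP^1}=p_1^*K_X$ and the foliated part, which the paper leaves implicit and which, with the paper's definition of $K^{[t]}_{\W/(X\times\PP^1)}$, is tautological.
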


\begin{proof}
Since $\F_{\W}$ is the saturated birational transform of the product foliation, we have
\[
K^{[t]}_{\W/\PP^1}=\Pi^*p_1^*K^{[t]}_{X,\F}+K^{[t]}_{\W/(X\times\PP^1)}.
\]
Substituting this into the definition of $\DF^{[t]}$ gives the claimed decomposition.
\end{proof}

We now show that $\cN = \Theta^*\overline{\L}$ is nef and can be expressed as a $\Q$-linear combination of $\M$ and $E$.

\begin{lemma}
\label{lem:line-bundle-comparison-flag}
After replacing $L$ and $\L$ by sufficiently divisible multiples, there exists an effective vertical $\Pi$-exceptional $\Q$-divisor $E$ on $\W$ such that $\cN\sim_{\Q}\M-E$. Moreover, $\cN$ is nef over $\PP^1$.
\end{lemma}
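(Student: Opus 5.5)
This is the standard Odaka-type comparison. The plan is to compare $\cN$ and $\M$ on $\W$: show that $\M-\cN$ is supported on the central fibre, then use the negativity lemma to get effectivity.

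\textbf{Step 1: reduce to the central fibre.} Replace $L$ and $\L$ by multiples so that both are Cartier. Over $\PP^1\setminus\{0\}$, both $\Theta$ and $\Pi$ are isomorphisms, and $\overline{\L}$ restricts to $p_1^*L$ under the equivariant identification of the test configuration with the product. Hence $\cN-\M$ is linearly equivalent, on the preimage of $\PP^1\setminus\{0\}$, to a divisor pulled back from $\PP^1\setminus\{0\}$. Equivariantly, the restriction of $\overline{\L}$ near $\infty$ is the trivial product, so no twist appears there. Thus $\cN-\M\sim_{\Q}D$ for some $\Q$-divisor $D$ supported on $\W_0$, the fibre over $0$.

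\textbf{Step 2: normalise the twist and make $D$ exceptional.} The fibre $\W_0$ consists of the strict transform $\widetilde{X_0}$ of $X\times\{0\}$ together with $\Pi$-exceptional divisors. Since $\W_0\sim 0$ (it is the pullback of a point of $\PP^1$), we may add a multiple of $\W_0$ to $D$ so that $\widetilde{X_0}$ has coefficient $0$. After this, $D$ is $\Pi$-exceptional and vertical. This is the usual shift of $\overline{\L}$ by $\OO(c\,\{0\})$, and it does not affect $\DF^{[t]}$. We write $E:=-D$, so that $\cN\sim_{\Q}\M-E$.

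\textbf{Step 3: effectivity of $E$ (the main step).} Relative to $\Pi$, the divisor $\M$ is $\Pi$-trivial. Since $\overline{\L}$ is relatively ample over $\PP^1$ and $\Theta$ is a morphism over $\PP^1$, the divisor $\cN=\Theta^*\overline{\L}$ is nef over $\PP^1$, and in particular $\Pi$-nef. Hence $-E\equiv_{\Pi}\cN$ is $\Pi$-nef and $\Pi$-exceptional. By the negativity lemma applied to $\Pi:\W\to X\times\PP^1$, a $\Pi$-exceptional divisor $-E$ that is $\Pi$-nef satisfies $-(-E)=E\ge 0$, so $E$ is effective.

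The delicate point is Step 2. It requires that the coefficient normalisation along $\widetilde{X_0}$ be compatible with $\G_m$-linearisations, so that the replacement of $\overline{\L}$ by $\overline{\L}+c\,\W_0$ is legitimate. I would justify this by noting that both linearised bundles agree over $\A^1\setminus\{0\}$ via the test configuration isomorphism, so their difference is an invariant Cartier divisor supported on $\W_0$. The nefness of $\cN$ over $\PP^1$ is immediate, since a pullback of a relatively ample divisor along a morphism over $\PP^1$ is relatively nef.
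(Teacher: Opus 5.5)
Your proof is correct, but it follows a different route from the paper's. The paper never compares $\cN$ and $\M$ directly on $\W$. Instead it invokes \cite[Proposition~3.10]{Odaka12} to produce a $\G_m$-invariant flag ideal $\J$ whose blow-up $\B$ dominates $\X$ with $f^*\L\simeq\beta^*p_1^*L\otimes\cO_{\B}(-E_{\J})$. It then passes to a common model dominating $\B$ and takes $E$ to be the pullback of $E_{\J}$. Effectivity and verticality are then automatic, since $\cO_{\B}(-E_{\J})=\J\cdot\cO_{\B}$ with $\J$ cosupported on $X\times\{0\}$.

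Your argument works directly on $\W$. You note that $\cN-\M$ is trivial off $\W_0$, remove the coefficient of the strict transform of $X\times\{0\}$, and get $E\ge 0$ from the negativity lemma for $\Pi$. That step uses $-E\equiv_{\Pi}\cN$ being $\Pi$-nef, because $\cN$ is nef over $\PP^1$ and $\M$ is $\Pi$-trivial. This is self-contained, works on any common model, and needs no flag-ideal machinery. The paper's route additionally records that $E$ is the exceptional divisor of a flag-ideal blow-up. That is the form in which Lemma~\ref{lem:odaka-canonical-part-inequality} and the later ``nontrivial flag ideal'' criterion are stated.

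One slip in Step~2: $\W_0$ is not linearly equivalent to $0$ on the compact $\W$. It is the pullback of $\cO_{\PP^1}(1)$, and it is principal only over $\A^1$. So your normalisation really does replace $\overline{\L}$ by its twist by the pullback of $\cO_{\PP^1}(-d)$, where $d$ is the coefficient of $\widetilde{X_0}$ in $D$; your own remark about shifting by $\OO(c\,\{0\})$ says as much.

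This twist is harmless. Twisting by the pullback of $\cO_{\PP^1}(c)$ changes $V\cdot\DF^{[t]}$ by $nc\,\mu(X,\F,L)L^n+nc\,K^{[t]}_{X,\F}\cdot L^{n-1}=0$. It is also unavoidable. For a product test configuration with a nonzero weight on $L$, no $\Pi$-exceptional $E$ satisfies $\cN\sim_{\Q}\M-E$: pushing forward to $X\times\PP^1$ would give a nonzero multiple of a fibre that is $\Q$-linearly trivial. The paper's appeal to Odaka's proposition implicitly needs the same normalisation. Your explicit treatment of it is therefore an improvement, not a gap.
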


\begin{proof}
We apply \cite[Proposition~3.10]{Odaka12} to the underlying normal test configuration $(\X,\L)$. Since $\X$ is normal, it is in particular partially normal in the sense used there. Thus, after replacing $\L$ by a sufficiently divisible tensor power, there exists a $\G_m$-invariant flag ideal $\J\subset \cO_{X\times\A^1}$ and a blow-up
\[
\beta:\B:=\operatorname{Bl}_{\J}(X\times\A^1) \longrightarrow X\times\A^1
\]
which dominates $\X$. If
\[
\cO_{\B}(-E_{\J})
= \J\cdot\cO_{\B},
\]
then by \cite[Proposition 3.10]{Odaka12}, after replacing $L$ by the corresponding power, we obtain an equality of relatively semiample line bundles
\[
f^*\L\simeq \beta^*p_1^*L\otimes\cO_{\B}(-E_{\J}),
\]
where $f:\B\to\X$ is the induced birational morphism. Equivalently,
\[
f^*\L\sim_{\Q}\beta^*p_1^*L-E_{\J}.
\]

Now let $\Theta:\W\to\overline{\X}$, $\Pi:\W\to X\times\PP^1$ be the smooth equivariant common model chosen above. Replacing $\W$ by a higher equivariant common model if necessary, we may assume that $\W$ dominates the compactification of $\B$. Pulling the preceding equality back to $\W$, we obtain
\[
\cN:=\Theta^*\overline{\L} \sim_{\Q} \Pi^*p_1^*L-E,
\]
where $E$ is the pullback of $E_{\J}$ to $\W$. By definition, $\M:=\Pi^*p_1^*L$ and hence $\cN\sim_{\Q}\M-E$.

The divisor $E$ is effective because $E_{\J}$ is effective. It is vertical because the flag ideal $\J$ is supported on the central fibre $X\times\{0\}$. It is $\Pi$-exceptional, with the sign convention used above, because it is the exceptional divisor of the flag-ideal blow-up and its pullback to $\W$.

Finally, $\cN=\Theta^*\overline{\L}$ is nef over $\PP^1$, since $\overline{\L}$ is relatively nef over $\PP^1$ and nefness is preserved under pullback.
\end{proof}

We will now show how we can estimate the intersection numbers of $\cN$ and $\M$.

\begin{lemma}
\label{lem:odaka-canonical-part-inequality}
We have
\[
\cN^n\cdot(\M+nE)\ge0.
\]
Moreover, if the associated flag ideal is nontrivial, equivalently
$E\neq 0$ on the flag-ideal model, then
\[
N^n\cdot(M+nE)>0.
\]
\end{lemma}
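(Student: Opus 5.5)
The plan is to reduce the inequality to a sum of terms of the form $E^2\cdot(\text{nef})^{n-1}$ by an algebraic telescoping identity, and then control the sign of each term by a Zariski/Hodge-index type negativity argument for vertical divisors on $\W$.

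First I will use $\cN\sim_{\Q}\M-E$ from Lemma \ref{lem:line-bundle-comparison-flag}, together with $\M^{n+1}=0$, which holds because $\M$ is pulled back from the $n$-dimensional variety $X$. From $\cN^n\cdot\M=\cN^{n+1}+\cN^n\cdot E$ we get
\[
\cN^n\cdot(\M+nE)=(n+1)\,\cN^n\cdot\M-n\,\cN^{n+1}=\cN^{n+1}+(n+1)\,\cN^n\cdot E .
\]
Next I factor $\M^{n+1}-\cN^{n+1}=E\cdot\sum_{i=0}^n\M^i\cN^{n-i}$, which gives $\cN^{n+1}=-E\cdot\sum_{i=0}^n \M^i\cN^{n-i}$. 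Substituting, I obtain
\[
\cN^n\cdot(\M+nE)=\sum_{i=0}^n E\cdot\cN^{n-i}\cdot(\cN^i-\M^i).
\]
Factoring again, $\cN^i-\M^i=-E\cdot\sum_{j=0}^{i-1}\cN^j\M^{i-1-j}$. This yields
\[
\cN^n\cdot(\M+nE)=-\sum_{a+b=n-1} c_{a,b}\,E^2\cdot\cN^a\cdot\M^b,\qquad c_{a,b}\ge 0,
\]
and I will check that $c_{n-1,0}=n>0$. So it suffices to show $E^2\cdot\cN^a\cdot\M^b\le 0$ for all $a+b=n-1$, with strict inequality for at least one pair $(a,b)$ having $c_{a,b}>0$ when $E\neq0$.

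The second step is the sign. Both $\M$ and $\cN$ are nef on $\W$: $\M$ is the pullback of an ample divisor, and $\cN$ is nef over $\PP^1$ and trivial in the $\PP^1$-direction after adding a multiple of a fibre, which does not change these intersection numbers because $E$ is vertical. Also, $E$ is an effective divisor supported in the central fibre $\W_0$. I will apply the Zariski lemma in its higher-dimensional form: for a divisor $D$ supported on a fibre and nef classes $H_1,\dots,H_{n-1}$, one has $D^2\cdot H_1\cdots H_{n-1}\le0$. I will prove this by perturbing the nef classes to ample ones, cutting down by general members to reduce to a fibred surface over $\PP^1$, and invoking the classical Zariski lemma there, with a limiting argument for the nef case. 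This gives the non-strict inequality.

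For strictness, which I expect to be the main obstacle, I will look at the term $E^2\cdot\cN^{n-1}$ (coefficient $n$) and also $E^2\cdot\M^{n-1}$ if needed. Since $E$ is $\Pi$-exceptional, it does not contain the strict transform of $X\times\{0\}$, so $E$ is not a rational multiple of the full fibre $\W_0$. The classical Zariski lemma then gives strict negativity on the surface cut out by sufficiently positive classes. The difficulty is that $\cN$ and $\M$ are only nef, so a general complete intersection surface might meet $E$ trivially. To handle this, I will first try replacing $\cN$ by $\cN+\epsilon\M$, which is $\Q$-linearly $(1+\epsilon)\M-E$; by the flag-ideal description this is ample over the blow-up of $\J$ for small $\epsilon$, so $E$ meets its general complete intersections nontrivially. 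I will then use that the polynomial identity above is continuous in $\epsilon$ and that all terms carry the correct sign. If that perturbation does not close the argument, I will fall back on Odaka's original argument \cite{Odaka11}, restricting to the flag-ideal model $\B$, where $-E$ is $\Pi$-ample, and using Lemma \ref{lem:relative-intersection-sign} to see that $E^2\cdot\M^{s}\cdot(-E)^{n-1-s}$-type terms are strictly negative when $E\neq0$.
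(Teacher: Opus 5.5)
Your proposal is correct, and in substance it is the paper's argument. Your telescoping identity is the paper's polynomial identity: in both, $E^2\cdot\cN^{n-1-b}\cdot\M^b$ appears with weight $n-b$. Your Zariski lemma for vertical divisors is a precise form of the paper's sketch (cut by nef classes down to a surface and apply the Hodge index theorem). So the non-strict inequality goes through exactly as in the paper.

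For strictness the paper gives no argument and only cites Odaka. Here your text needs one correction. Continuity in $\epsilon$ cannot give strictness, because a limit of strict inequalities is only non-strict. What makes the perturbation work is that
\[
E^2\cdot(\cN+\epsilon\M)^{n-1}=\sum_{b=0}^{n-1}\binom{n-1}{b}\epsilon^b\,E^2\cdot\cN^{n-1-b}\cdot\M^b
\]
is a polynomial in $\epsilon$ whose coefficients are all $\le 0$ by your first step. Strict negativity at a single $\epsilon>0$ therefore forces some $E^2\cdot\cN^{n-1-b}\cdot\M^b<0$, and every such term enters your identity with weight $n-b>0$.

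To obtain strict negativity at that $\epsilon$, work on the flag-ideal model $\B$; the intersection numbers agree with those on $\W$ by the projection formula.
\begin{itemize}
\item On $\B$, $(1+\epsilon)\M-E$ is ample over $\PP^1$. It is a positive combination of the relatively nef $\M-E$ and the relatively ample $k\M-E$ for $k\gg0$. Adding a fibre to make it ample does not change these numbers, since $E$ is vertical.
\item A general complete-intersection surface $S$ then meets every component of $\B_0$. In particular it meets the strict transform of $X\times\{0\}$, which has coefficient $1$ in $\B_0$ and $0$ in $E$.
\item Hence $E|_S\neq 0$ and $E|_S$ is not proportional to the connected fibre of $S$. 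This non-proportionality is what Zariski's lemma needs; merely meeting $E$ nontrivially is not enough.
\end{itemize}

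Your fallback also works once you make the link to the terms of your identity explicit. Take $b=s$ and expand $\cN^{n-1-s}=(\M-E)^{n-1-s}$. Every summand carrying more than $s$ copies of $\M$ vanishes by Lemma \ref{lem:dimension-vanishing-flag}, so
\[
E^2\cdot\cN^{n-1-s}\cdot\M^s=\M^s\cdot(-E)^{n+1-s}=-\M^s\cdot(-E)^{n-s}\cdot E<0
\]
by Lemma \ref{lem:relative-intersection-sign}.

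The perturbation route is self-contained and avoids dimension bookkeeping. Odaka's route identifies exactly which term is strictly negative. Both need $E$ to be $\Pi$-exceptional, and you correctly use this. That is the real content of ``nontrivial'' here: if $E$ were a multiple of the central fibre, every term would vanish.
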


\begin{proof}
This follows from \cite[Lemma 2.8]{Odaka11}. We briefly recall the argument for the reader's convenience. Since $\cN\sim_{\Q}\M-E$ we need to show
\[
(\M-E)^n\cdot(\M+nE)\ge0.
\]
Using the polynomial identity
\[
(x-y)^n(x+ny) =x^{n+1}- \sum_{i=1}^n (n+1-i)(x-y)^{n-i}x^{i-1}y^2,
\]
and substituting $x=\M$ and $y=E$, and using $\M^{n+1}=0$ because $\M$ is pulled back from the $n$-dimensional variety $X$, we obtain
\[
(\M-E)^n(\M+nE)=\sum_{i=1}^n(n+1-i)(-E^2)\cdot(\M-E)^{n-i}\M^{i-1}.
\]
Notice now that each term
\[
(-E^2)\cdot(\M-E)^{n-i}\M^{i-1}
\]
is nonnegative. To see this, after cutting by general members of $|mL|$ and by nef divisors representing $\M-E$, we reduce to a surface contained in a fibre of the exceptional locus, where the assertion follows from the Hodge index theorem and the relative ampleness of $-E$ over the flag-ideal centre. Hence every summand is nonnegative, and therefore
\[
(\M-E)^n\cdot(\M+nE)\ge0.
\]
The strict positivity statement follows from the same argument (cf. \cite[Lemma 2.8 and proof of Theorem 2.6]{Odaka11}).
\end{proof}

\subsection{Semistability in the adjoint Calabi--Yau and general type cases foliated cases}

We will compute the canonical part in the adjoint Calabi--Yau foliated case first, using the estimates we obtained in the previous subsection.

\begin{lemma}
\label{lem:canonical-part-mixed-CY}
Assume that $K^{[t]}_{X,\F}\equiv0$. Then, for every ample $\Q$-Cartier divisor $L$ and every common model as above,
\[
\operatorname{Can}^{[t]}(\W,\cN)=0.
\]
\end{lemma}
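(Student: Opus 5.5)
The plan is to show that both summands in the definition of $\operatorname{Can}^{[t]}(\W,\cN)$ vanish separately.

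\emph{Second summand.} Since $K^{[t]}_{X,\F}$ is $\Q$-Cartier and numerically trivial on $X$, its pullback $\Pi^*p_1^*K^{[t]}_{X,\F}$ is numerically trivial on $\W$. Pullback of a numerically trivial $\Q$-Cartier class along a proper morphism stays numerically trivial, because intersection numbers with curves are computed via the projection formula on the image curves. Hence
\[
\Pi^*p_1^*K^{[t]}_{X,\F}\cdot\cN^n=0 .
\]
This can also be justified directly: intersecting with the nef class $\cN^n$ is a numerical pairing, and it kills numerically trivial divisors. The only technical point is that $\W$ is smooth and projective, so intersection theory of $\Q$-Cartier classes is available; this holds by Lemma \ref{lem:common-model-f-compatible}.

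\emph{First summand.} The foliated slope satisfies
\[
\mu(X,\F,L)=\frac{-K^{[t]}_{X,\F}\cdot L^{n-1}}{L^n}=0,
\]
because $K^{[t]}_{X,\F}\equiv0$ makes the numerator vanish. So the term $\frac{n}{n+1}\mu(X,\F,L)\cN^{n+1}$ is zero regardless of the value of $\cN^{n+1}$, which is finite since $\W$ is projective.

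Adding the two vanishing terms gives $\operatorname{Can}^{[t]}(\W,\cN)=0$ for every ample $L$ and every common model. This is independent of the comparison $\cN\sim_{\Q}\M-E$ from Lemma \ref{lem:line-bundle-comparison-flag}, so that lemma is not needed here.

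I expect no real obstacle. The one point to state carefully is that numerical triviality on $X$ pulls back to numerical triviality on $\W$ through $p_1\circ\Pi$. If only $\Q$-linear triviality were assumed, this step would be even more immediate.
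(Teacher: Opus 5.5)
Your proof is correct and is essentially the paper's argument: $\mu(X,\F,L)=0$ because $K^{[t]}_{X,\F}\equiv 0$, and $\Pi^*p_1^*K^{[t]}_{X,\F}$ is numerically trivial on $\W$, so its intersection with $\cN^n$ vanishes. Your remark that the comparison $\cN\sim_{\Q}\M-E$ is not needed here is also accurate.
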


\begin{proof}
By numerical triviality,
\[
\mu(X,\F,L)=\frac{-K^{[t]}_{X,\F}\cdot L^{n-1}}{L^n}=0.
\]
Moreover, $\Pi^*p_1^*K^{[t]}_{X,\F}\equiv0$, therefore $\Pi^*p_1^*K^{[t]}_{X,\F}\cdot\cN^n=0$. Hence, both terms in the definition of $\operatorname{Can}^{[t]}$ presented in Lemma \ref{lem:common-model-odaka-decomposition} vanish, proving the claim.
\end{proof}

We will now do the same for the adjoint general type foliated case.

\begin{lemma}
\label{lem:canonical-part-mixed-general-type}
Assume that $K^{[t]}_{X,\F}$ is ample, and let $L=mK^{[t]}_{X,\F}$ for some sufficiently divisible positive integer $m$. Then, for every
common model as above,
\[
\operatorname{Can}^{[t]}(\W,\cN)\ge 0.
\]
Moreover, if the corresponding test configuration is non-trivial, then
\[
\operatorname{Can}^{[t]}(\W,\cN)>0.
\]
\end{lemma}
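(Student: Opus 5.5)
With $L=mK^{[t]}_{X,\F}$ the canonical term will be rewritten as a positive multiple of $\cN^n\cdot(\M+nE)$, and Lemma~\ref{lem:odaka-canonical-part-inequality} then finishes the argument.

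\textbf{Step 1: slope and canonical divisor.} First,
\[
\mu(X,\F,L)=\frac{-K^{[t]}_{X,\F}\cdot L^{n-1}}{L^n}=-\frac{1}{m}.
\]
Second, $\Pi^*p_1^*K^{[t]}_{X,\F}=\frac1m\M$. Hence
\[
\operatorname{Can}^{[t]}(\W,\cN)=\frac{1}{mV}\Big(-\frac{n}{n+1}\cN^{n+1}+\M\cdot\cN^n\Big).
\]

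\textbf{Step 2: substitute $\cN\sim_\Q\M-E$.} By Lemma~\ref{lem:line-bundle-comparison-flag}, $\cN\sim_\Q\M-E$, after passing to divisible multiples. This rescales $L$ and $\overline{\L}$ simultaneously. Since $\DF^{[t]}$ and $\operatorname{Can}^{[t]}$ are homogeneous of degree zero under this rescaling, nothing changes.

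Substituting gives
\[
\cN^{n+1}=\cN^n\cdot(\M-E),
\]
so
\[
-\tfrac{n}{n+1}\cN^{n+1}+\M\cdot\cN^n=\tfrac{1}{n+1}\,\cN^n\cdot\bigl(-n\M+nE+(n+1)\M\bigr)=\tfrac{1}{n+1}\,\cN^n\cdot(\M+nE).
\]
Therefore
\[
\operatorname{Can}^{[t]}(\W,\cN)=\frac{1}{m(n+1)V}\,\cN^n\cdot(\M+nE).
\]
Lemma~\ref{lem:odaka-canonical-part-inequality} then gives $\operatorname{Can}^{[t]}\ge0$.

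\textbf{Step 3: strict positivity.} For strict positivity I will invoke the strict part of Lemma~\ref{lem:odaka-canonical-part-inequality}, which requires $E\neq0$. So the remaining task is to show that a non-trivial test configuration produces a non-trivial flag ideal in Odaka's correspondence.

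Suppose instead that $\J$ is trivial, i.e. principal of the form $(\tau^k)$. Then $E$ is a multiple of the pullback of the central fibre, and $\overline{\L}$ pulls back to $p_1^*L$ twisted by a fibre class. The normal test configuration $\X$ is then dominated by $X\times\A^1$ with relatively ample $\L$ coming from $p_1^*L$. Normality of $\X$, together with relative ampleness, forces $\X\cong X\times\A^1$ with trivial action up to twist, which is the trivial (product) test configuration.

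I expect this identification — nontriviality of the test configuration if and only if $E\neq0$ modulo fibre classes — to be the main obstacle. The fibre-class ambiguity is harmless: adding a multiple of the central fibre to $E$ changes $\cN^n\cdot(\M+nE)$ by $n\,\cN^n\cdot F=n\,L^n$ times the multiple. The same shift also adjusts the $\frac{n}{n+1}\mu\cN^{n+1}$ term, so that $\operatorname{Can}^{[t]}$ is invariant. I would therefore normalise $\J$ so that it is not divisible by $\tau$ before applying Odaka's strict inequality from \cite[Lemma 2.8]{Odaka11}.
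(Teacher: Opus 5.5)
Your proposal is correct and follows the paper's proof: the same computation $\mu(X,\F,L)=-\frac1m$ and $\Pi^*p_1^*K^{[t]}_{X,\F}=\frac1m\M$, then the substitution $\cN\sim_\Q\M-E$ giving $\operatorname{Can}^{[t]}(\W,\cN)=\frac{1}{m(n+1)V}\cN^n\cdot(\M+nE)$, then Lemma~\ref{lem:odaka-canonical-part-inequality}. Your Step 3 goes further than the paper, which only asserts that a non-trivial test configuration has a non-trivial flag ideal; your argument for the case $\J=(\tau^k)$ (the map $X\times\A^1\to\X$ contracts no curves, so it is an isomorphism by Zariski's main theorem) and your normalisation of $\J$ are sound, with one harmless slip in the fibre-class remark: if you shift $\cN$ together with $E$, as $\cN\sim_\Q\M-E$ requires, the two $ncL^n$ contributions cancel, so $\cN^n\cdot(\M+nE)$ is itself unchanged rather than shifted by $nL^n$ times the multiple.
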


\begin{proof}
Since $K^{[t]}_{X,\F}=\frac{1}{m}L$, we have
\[
\mu(X,\F,L) = \frac{-K^{[t]}_{X,\F}\cdot L^{n-1}}{L^n}= -\frac{1}{m}.
\]
Moreover, $\Pi^*p_1^*K^{[t]}_{X,\F}=\frac{1}{m}\M$, and hence by Lemma \ref{lem:common-model-odaka-decomposition}
\[
\operatorname{Can}^{[t]}(\W,\cN)=\frac{1}{mV}\left(\M\cdot\cN^n-\frac{n}{n+1}\cN^{n+1}\right).
\]
By Lemma \ref{lem:line-bundle-comparison-flag} we have $\cN\sim_{\Q}\M-E$, so in particular $\cN^{n+1}=\cN^n\cdot(\M-E)$. Hence
\[
\M\cdot\cN^n-\frac{n}{n+1}\cN^{n+1}=\frac{1}{n+1}\cN^n\cdot(\M+nE).
\]
Thus
\[
\operatorname{Can}^{[t]}(\W,\cN)=\frac{1}{mV(n+1)} \cN^n\cdot(\M+nE)\geq 0.
\]
where the last inequality follows by Lemma \ref{lem:odaka-canonical-part-inequality}.

If the test configuration is non-trivial, then the associated flag ideal is nontrivial, equivalently $E\neq 0$ on the flag-ideal model. In this case, the strict form of the inequality in Lemma \ref{lem:odaka-canonical-part-inequality} gives $N^n\cdot(M+nE)>0$. Consequently $\operatorname{Can}^{[t]}(\W,\cN)>0$ completing the proof.
\end{proof}

The above computations allow us to deduce the K-semistability of Calabi--Yau and general type log canonical adjoint foliated structures, which will prove parts 1 and 2 of Theorem \ref{thm: main intro thm 2}. We start with the adjoint Calabi--Yau foliated case.

\begin{theorem}
\label{thm:mixed-odaka-CY}
Let $(X,\F,t)$ be a log canonical normal projective $\Q$-Gorenstein adjoint Calabi--Yau foliated structure. Then for every ample $\Q$-Cartier divisor $L$, the adjoint foliated structure $(X,\F,L)$ is $t$-K-semistable. If, furthermore $(X,\F,t)$ is klt, then for every ample $\Q$-Cartier divisor $L$, the adjoint foliated structure $(X,\F,L)$ is $t$-K-stable.
\end{theorem}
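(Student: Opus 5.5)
The plan is to take an arbitrary normal $\F$-compatible test configuration $(\X,\F_{\X},\L)$ and pass to the smooth equivariant common model $\W$ of Lemma \ref{lem:common-model-f-compatible}. Lemma \ref{lem:birational-invariance-mixed-df} shows the mixed Donaldson--Futaki invariant is unchanged there, with $\cN=\Theta^*\overline{\L}$. By Lemma \ref{lem:common-model-odaka-decomposition} we then write
\[
\DF^{[t]}(\X,\F_{\X},\L)=\operatorname{Can}^{[t]}(\W,\cN)+\operatorname{Disc}^{[t]}(\W,\cN).
\]
Since $K^{[t]}_{X,\F}\equiv 0$, Lemma \ref{lem:canonical-part-mixed-CY} gives $\operatorname{Can}^{[t]}(\W,\cN)=0$. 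It therefore suffices to show $\operatorname{Disc}^{[t]}(\W,\cN)=\frac1V K^{[t]}_{\W/(X\times\PP^1)}\cdot\cN^n$ is $\ge 0$, and is $>0$ in the klt case for non-trivial test configurations.

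For semistability, we apply Corollary \ref{cor:mixed-relative-effective} to $\Pi$, restricted over $\A^1$. Over $\infty$, $\Pi$ is an isomorphism, so nothing changes. Since $(X,\F,t)$ is log canonical, $K^{[t]}_{\W/(X\times\PP^1)}=\sum_G a_G G$ is effective, vertical and $\Pi$-exceptional. Lemma \ref{lem:line-bundle-comparison-flag} says $\cN$ is nef, so $\cN^n\cdot G\ge 0$ for each prime $G$. Hence $\operatorname{Disc}^{[t]}\ge 0$ and $\DF^{[t]}\ge0$, which gives $t$-K-semistability for every ample $L$.

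For stability in the klt case, Lemma \ref{lem:mixed-gauss-rees-coefficient} gives each coefficient as $a_G=A^{[t]}_{X,\F}(v_G)+(1-t)(r_G-1)$. This is strictly positive because $A^{[t]}_{X,\F}(v_G)>0$ when $v_G$ is nontrivial. If $v_G$ is trivial, $G$ would be the strict transform of $X\times\{0\}$, which is not exceptional. So $K^{[t]}_{\W/(X\times\PP^1)}$ has strictly positive coefficients on every $\Pi$-exceptional divisor, and it remains to find one exceptional $G$ with $\cN^n\cdot G>0$.

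The main obstacle is producing this $G$ when the test configuration is non-trivial. I would argue that $\cN^n\cdot G>0$ exactly when $G$ is not contracted by $\Theta$, i.e.\ when $G$ is the strict transform of an irreducible component of the central fibre $\overline{\X}_0$, since $\overline{\L}$ is relatively ample. Now suppose every component of $\overline{\X}_0$ were non-$\Pi$-exceptional. Then $\overline{\X}_0$ is irreducible and its valuation is $\ord_{X\times\{0\}}$. Every $\Pi$-exceptional divisor would then be $\Theta$-exceptional, so the birational map $\overline{\X}\dashrightarrow X\times\PP^1$ would be an isomorphism in codimension one. Normality, together with $\cN\sim_\Q \M-E$ with $E$ $\Pi$-exceptional, should then force $E=0$ after pushing forward and comparing ample models. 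That makes $(\X,\L)$ equal to the trivial configuration $X\times\A^1$ up to a twist, so it is trivial as an $\F$-compatible test configuration. This matches Odaka's argument in \cite{Odaka11}. Hence a non-trivial test configuration has some component $G$ of $\overline{\X}_0$ that is $\Pi$-exceptional, with $a_G>0$ and $\cN^n\cdot G>0$, giving $\DF^{[t]}>0$.
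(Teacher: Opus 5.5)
Your proof is correct. The semistability half coincides with the paper's: the common model $\W$, $\operatorname{Can}^{[t]}=0$, effectivity of $K^{[t]}_{\W/(X\times\PP^1)}$ via Corollary~\ref{cor:mixed-relative-effective}, and relative nefness of $\cN$. For strict positivity in the klt case, however, you take a genuinely different route.

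The paper stays with the flag-ideal divisor $E$. It picks $c>0$ with $K^{[t]}_{\W/(X\times\PP^1)}-cE\ge 0$, and then extracts $E\cdot\cN^n>0$ from Odaka's inequalities $\cN^{n+1}\le 0$ and $\cN^n\cdot(\M+nE)>0$ for a nontrivial flag ideal. You instead use $\cN^n\cdot G=\overline{\L}^n\cdot\Theta_*G$. Hence only strict transforms of components of $\overline{\X}_0$ contribute, each with positive degree by relative ampleness of $\overline{\L}$. You then show that a nontrivial normal test configuration has such a component which is $\Pi$-exceptional.

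What your route buys:
\begin{enumerate}
\item It bypasses the flag-ideal comparison entirely. You need neither the strict form of Odaka's inequality nor $\Supp E\subset\operatorname{Exc}(\Pi)$. The paper needs the latter to get $K^{[t]}_{\W/(X\times\PP^1)}-cE\ge 0$, and it only holds after normalising the twist so that $I_0\neq 0$.
\item In the Calabi--Yau case it even yields the exact identity
\[
\DF^{[t]}(\X,\F_{\X},\L)=\frac{1}{V}\sum_{D}\Bigl(A^{[t]}_{X,\F}\bigl(\ord_D|_{K(X)}\bigr)+(1-t)\bigl(\ord_D(s)-1\bigr)\Bigr)\bigl(\overline{\L}|_D\bigr)^n,
\]
summed over the components $D$ of $\overline{\X}_0$ other than the strict transform of $X\times\{0\}$.
\end{enumerate}
The paper's route, in exchange, runs through the same flag-ideal inequality it uses in the general type and Fano cases.

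One tidying remark: your last step does not need $E$, and ``force $E=0$'' is a detour. Suppose $\phi\colon\overline{\X}\dashrightarrow X\times\PP^1$ is an isomorphism in codimension one between normal varieties. Then $\phi_*\overline{\L}\sim_{\Q}p_1^*L+c\,(X\times\{0\})$ for some $c\in\Q$, since it agrees with $p_1^*L$ away from the central fibre. The relative section rings of $\overline{\L}$ and $\phi_*\overline{\L}$ coincide by normality. Taking relative $\operatorname{Proj}$ over $\PP^1$ therefore gives a $\G_m$-equivariant isomorphism $\overline{\X}\cong X\times\PP^1$ extending $\phi$. The foliation is then the product one by uniqueness of saturation.
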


\begin{proof}
Let $(\X,\F_{\X},\L)\to\A^1$ be a normal $\F$-compatible test configuration. Using Lemma \ref{lem:common-model-f-compatible}, we pass to a smooth equivariant common model $\Theta:\W\to\overline{\X}$, $\Pi:\W\to X\times\PP^1$ and define, as before, $\cN:=\Theta^*\overline{\L}$. By Lemma \ref{lem:birational-invariance-mixed-df}, we have that the mixed Donaldson--Futaki invariants satisfy
\[
\DF^{[t]}(\X,\F_{\X},\L) = \DF^{[t]}(\W,\F_{\W},\cN).
\]
Furthermore, by Lemma \ref{lem:common-model-odaka-decomposition} these decompose as $\DF^{[t]} = \operatorname{Can}^{[t]}+\operatorname{Disc}^{[t]}$ where the canonical term vanishes $\operatorname{Can}^{[t]}=0$ by Lemma \ref{lem:canonical-part-mixed-CY}. Since $(X,\F,t)$ is log canonical, Corollary \ref{cor:mixed-relative-effective} gives that the divisor $K^{[t]}_{\W/(X\times\PP^1)}$ is effective. 

Furthermore, the divisor $K^{[t]}_{\W/(X\times\PP^1)}$ is vertical, and $\cN$ is relatively nef by Lemma \ref{lem:line-bundle-comparison-flag}. Therefore
\[
\operatorname{Disc}^{[t]}=\frac{1}{V}
K^{[t]}_{\W/(X\times\PP^1)}\cdot\cN^n \ge0.
\]
Thus, $\DF^{[t]}(\X,\F_{\X},\L)\ge0$. Since the test configuration was arbitrary, $(X,\F,L)$ is $t$-K-semistable.

We will now prove the case where $(X,\F,t)$ is klt. As before, the canonical part vanishes, so we need to show that the discriminant part is positive for a non-trivial $\F$-compatible test configuration. Taking the same common model $\W$ as before, recall that $K^{[t]}_{\mathcal W/(X\times\PP^1)} = \sum_j a_jG_j$ as a sum over $\Pi$-exceptional prime divisors. By Lemma \ref{lem:mixed-gauss-rees-coefficient} we have $a_j= A^{[t]}_{X,\F}(v_j)+(1-t)(r_j-1)$ for $v_j:=\ord_{G_j}|_{K(X)}$ and $r_j:=\ord_{G_j}(s)$.

Since $(X,\F,t)$ is klt, we have $A^{[t]}_{X,\F}(v_j)>0$. Moreover $r_j\ge1$, because $G_j$ lies over the central fibre. Hence $a_j>0$ for every $\Pi$-exceptional prime divisor $G_j$.

Since $E$ is an effective vertical exceptional divisor supported on the exceptional locus of $\Pi$, there exists a rational number $c>0$ such that $K^{[t]}_{\mathcal W/(X\times\PP^1)}-cE$ is effective. Because $\cN$ is relatively nef by Lemma \ref{lem:line-bundle-comparison-flag} and the divisor above is vertical and effective, we obtain
\[
K^{[t]}_{\mathcal W/(X\times\PP^1)}\cdot\cN^n \ge c\, E\cdot\cN^n.
\]
Thus it is enough to show that $E\cdot\cN^n>0$. First, we have
\[
\cN^{n+1} = (\M-E)^{n+1} = (\M-E)^{n+1}-\M^{n+1} \le0,
\]
by \cite[Lemma 2.8]{Odaka11}. On the other hand, as in \cite[Proof of Theorem 2.6]{Odaka11}, Lemma \ref{lem:odaka-canonical-part-inequality} gives $\cN^n\cdot(\M+nE)>0$ for a nontrivial flag ideal. In particular, since $\M+nE =\cN+(n+1)E$, we have $\cN^n\cdot(\M+nE) = \cN^{n+1} + (n+1)\cN^n\cdot E$, which is strictly positive and $\cN^{n+1}\le0$. This implies that $\cN^n\cdot E>0$.

Therefore $K^{[t]}_{\mathcal W/(X\times\PP^1)} \cdot\cN^n>0$, and hence $\DF^{[t]}(\X,\F_{\X},\L) >0$ for every non-trivial $\F$-compatible test configuration. Thus $(X,\F,L)$ is $t$-K-stable.
\end{proof}

We will now prove the adjoint general type foliated case.

\begin{theorem}
\label{thm:mixed-odaka-general-type}
Let $(X,\F,t)$ be a normal log canonical projective $\Q$-Gorenstein adjoint general type foliated structure. Then, for every sufficiently divisible positive integer $m$, the adjoint general type foliated structure $(X,\F,mK^{[t]}_{X,\F})$ is $t$-K-stable.
\end{theorem}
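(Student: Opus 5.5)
The plan mirrors the proof of Theorem \ref{thm:mixed-odaka-CY}, with the roles of the two terms swapped: the canonical term now carries the strict positivity, and the discrepancy term only needs to be non-negative.

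Fix a sufficiently divisible $m$ and set $L=mK^{[t]}_{X,\F}$. Take an arbitrary normal $\F$-compatible test configuration $(\X,\F_{\X},\L)$ for $(X,\F,L)$. By Lemma \ref{lem:common-model-f-compatible} we pass to a smooth $\G_m$-equivariant common model $\Theta:\W\to\overline{\X}$, $\Pi:\W\to X\times\PP^1$, and put $\cN:=\Theta^*\overline{\L}$. Lemma \ref{lem:birational-invariance-mixed-df} gives
\[
\DF^{[t]}(\X,\F_{\X},\L)=\DF^{[t]}(\W,\F_{\W},\cN),
\]
and Lemma \ref{lem:common-model-odaka-decomposition} splits the right-hand side as $\operatorname{Can}^{[t]}(\W,\cN)+\operatorname{Disc}^{[t]}(\W,\cN)$. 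We then estimate the two pieces separately.

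For the discrepancy term, $(X,\F,t)$ is log canonical, so Corollary \ref{cor:mixed-relative-effective} applied to $\Pi$ shows that $K^{[t]}_{\W/(X\times\PP^1)}$ is effective. This uses that $\Pi$ is an isomorphism away from the central fibre and is $\G_m$-equivariant; over $\infty$ nothing changes. The divisor is also vertical, since it is $\Pi$-exceptional and supported over $X\times\{0\}$. By Lemma \ref{lem:line-bundle-comparison-flag}, $\cN$ is nef over $\PP^1$. Hence
\[
\operatorname{Disc}^{[t]}=\frac1V K^{[t]}_{\W/(X\times\PP^1)}\cdot\cN^n\ge0,
\]
because an effective vertical divisor meets the $n$-th power of a relatively nef class non-negatively.

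For the canonical term, Lemma \ref{lem:canonical-part-mixed-general-type} gives $\operatorname{Can}^{[t]}(\W,\cN)\ge0$. If the test configuration is non-trivial, the same lemma gives $\operatorname{Can}^{[t]}(\W,\cN)>0$. Adding the two pieces, $\DF^{[t]}\ge0$ always, and $\DF^{[t]}>0$ for every non-trivial $\F$-compatible test configuration, which is $t$-K-stability.

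The main obstacle is the strictness step: showing that a non-trivial test configuration yields a flag ideal with $E\neq0$, so that the strict inequality in Lemma \ref{lem:odaka-canonical-part-inequality} applies. The approach I would try first is the following. If $E=0$ on $\W$, then $\cN\sim_{\Q}\M$ and $\overline{\L}$ pulls back to $\M$. Normality of $\X$ together with relative ampleness of $\L$ then forces $\X\cong\operatorname{Proj}\bigoplus_k H^0(X\times\A^1,\cO(kL))$, which is the trivial product. Because the saturated transform of the foliation is unique (Lemma \ref{lem:common-model-f-compatible}), $\F_{\X}$ is then the product foliation, so the $\F$-compatible test configuration is trivial. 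This is the usual subtlety in Odaka's argument, namely that triviality must be measured on the normalisation, and here it is absorbed by the normality hypothesis on test configurations.
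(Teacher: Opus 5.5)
Your proposal is correct and follows the paper's proof step for step: the same reduction to the common model $\W$, the same splitting $\DF^{[t]}=\operatorname{Can}^{[t]}+\operatorname{Disc}^{[t]}$, non-negativity of the discrepancy term from Corollary~\ref{cor:mixed-relative-effective} together with relative nefness of $\cN$, and strictness from Lemma~\ref{lem:canonical-part-mixed-general-type}. Your extra Proj argument for ``non-trivial $\Rightarrow$ $E\neq0$'', which the paper only asserts, is sound; but the strict inequality in Lemma~\ref{lem:odaka-canonical-part-inequality} actually requires $\J\neq(s^N)$, i.e.\ $E$ not a multiple of the central fibre, since for $E=N\W_0$ one gets $\cN^n\cdot(\M+nE)=0$, and your same argument handles that case because twisting by $\cO(N\W_0)$ only changes the linearisation.
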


\begin{proof}
We let $L:=mK^{[t]}_{X,\F}$ and $(\X,\F_{\X},\L)\to\A^1$ be a normal $\F$-compatible test configuration for $(X,\F,L)$. As in the proof of Theorem \ref{thm:mixed-odaka-CY} we pass to a smooth equivariant common model $\Theta:\W\to\overline{\X}$, $\Pi:\W\to X\times\PP^1$ and define $\cN:=\Theta^*\overline{\L}$. By Lemma \ref{lem:birational-invariance-mixed-df}, the mixed Donaldson--Futaki invariants satisfy
\[
\DF^{[t]}(\X,\F_{\X},\L) = \DF^{[t]}(\W,\F_{\W},\cN).
\]
and decompose as $\DF^{[t]} = \operatorname{Can}^{[t]}+\operatorname{Disc}^{[t]}$ by Lemma \ref{lem:common-model-odaka-decomposition}.

Furthermore, Lemma \ref{lem:canonical-part-mixed-general-type} shows that $\operatorname{Can}^{[t]}\geq 0$. Since $(X,\F,t)$ is log canonical, Corollary \ref{cor:mixed-relative-effective} shows that $K^{[t]}_{\W/(X\times\PP^1)}$ is effective. Thus, as in the proof of Theorem \ref{thm:mixed-odaka-CY}, we have
\[
\operatorname{Disc}^{[t]} = \frac{1}{V} K^{[t]}_{\W/(X\times\PP^1)} \cdot\cN^n \ge0.
\]
Therefore $\DF^{[t]}(\X,\F_{\X},\L)\ge 0$ for every normal $\F$-compatible test configuration.

If the test configuration is non-trivial, then the associated flag ideal is nontrivial, and Lemma \ref{lem:canonical-part-mixed-general-type} gives $\operatorname{Can}^{[t]}(W,N)>0$. Since the discrepancy part is nonnegative, we obtain $\DF^{[t]}(\X,\F_{\X},\L)>0$. Hence $(X,\F,mK^{[t]}_{X,\F})$ is $t$-K-stable.
\end{proof}

\begin{corollary}\label{cor: log boundedness gt}
    Let $d$ be a positive integer, $\Gamma\subset (0,1)$ a set satisfying the descending chain condition (DCC), and $C$ a positive real number. Then the following set of projective algebraically integrable foliated pairs
    $$\bigg\{(X,\F)\bigg|\dim(X) =d,\text{ } \exists t\in \Gamma \text{ s.t. } (X,\F,t) \text{ is K-stable general type, and } 0<\vol(K_{X,\F}^{[t]})\leq C \bigg\}$$
    is log birationally bounded.
\end{corollary}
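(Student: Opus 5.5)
The plan is to reduce the corollary to the known log birational boundedness results for adjoint foliated structures of general type, using the singularity criterion proved above as the bridge. The key observation is that K-stability imposes no extra geometric input here beyond log canonicity. So the corollary should follow by combining Theorem~\ref{thm:mixed-odaka-singularity-obstruction-normal} with a boundedness theorem for lc algebraically integrable adjoint foliated structures with ample $K^{[t]}_{X,\F}$, bounded volume, and $t$ in a DCC set.

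First, fix $(X,\F)$ in the set and choose $t\in\Gamma$ witnessing membership. Since $(X,\F,t)$ is $t$-K-stable, with polarisation $L=mK^{[t]}_{X,\F}$, it is in particular $t$-K-semistable. Because $\Gamma\subset(0,1)$, we have $0<t<1$, so Theorem~\ref{thm:mixed-odaka-singularity-obstruction-normal} applies and $(X,\F,t)$ is log canonical. Moreover, $K^{[t]}_{X,\F}$ is ample by the general type assumption, so $\vol(K^{[t]}_{X,\F})=(K^{[t]}_{X,\F})^d\in(0,C]$. The set is therefore contained in
\[
\{(X,\F)\mid \dim X=d,\ (X,\F,t)\text{ lc},\ t\in\Gamma,\ K^{[t]}_{X,\F}\text{ ample},\ \vol\le C\}.
\]

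Second, I would invoke the boundedness theory for algebraically integrable adjoint foliated structures developed in \cite{CHLMSSX24,CHLMSSX25,CLSV26}. These works establish effective birationality, namely that $|mK^{[t]}_{X,\F}|$ defines a birational map for some $m=m(d,\Gamma)$ on lc general type adjoint foliated structures with $t$ in a DCC set. The standard Hacon--McKernan--Xu argument then gives log birational boundedness once the volume is bounded above. Concretely, the linear system $|mK^{[t]}_{X,\F}|$ embeds $X$ birationally into a projective space of bounded dimension with image of degree at most $m^d C$. Together with the reduced exceptional and non-invariant divisors, this yields a bounded family, and the foliation is recovered in the family via its algebraically integrable structure, that is, by bounding the induced fibration.

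The main obstacle is the second step. One has to verify that the available boundedness or effective birationality statement in the cited works has hypotheses exactly matching lc, $t\in\Gamma$ DCC, and bounded volume. In particular, one must check whether it requires a potentially klt ambient variety or a $\Q$-factorial qdlt model. If it does, I would first pass to the qdlt modification of \cite[Theorem~1.9]{CHLMSSX24}. This preserves the volume of $K^{[t]}$ plus the boundary, since the extracted divisors have $A^{[t]}=0$ and the modification is crepant. I would then run the argument on that model and push the bounded family back to $X$.
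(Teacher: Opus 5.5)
Your proposal is correct and follows the paper's proof. The paper likewise uses Theorem~\ref{thm:mixed-odaka-singularity-obstruction-normal} to deduce that a $t$-K-stable adjoint general type foliated structure is log canonical (it also records the converse via Theorem~\ref{thm:mixed-odaka-general-type}, though only your direction is needed), and then cites \cite[Theorem B]{CLSV26} directly, so your second step reduces to that single citation and the HMX-style sketch and qdlt fallback are unnecessary.
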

\begin{proof}
    By Theorems \ref{thm:mixed-odaka-singularity-obstruction-normal} and \ref{thm:mixed-odaka-general-type} an adjoint general type foliated structure $(X,\F,t)$ is $t$-K-stable (for $L = K_{X,\F}^{[t]}$) if and only if $(X,\F,t)$ is log canonical. The result then follows from \cite[Theorem B]{CLSV26}.
\end{proof}

\subsection{Semistability in the adjoint Fano foliated case}

We now focus on adjoint Fano foliated structures, which will prove part 3 of Theorem \ref{thm: main intro thm 2}. 

\begin{theorem}\label{thm:mixed-odaka-Fano}
Let $(X,\F,t)$ be a normal projective $\Q$-Gorenstein adjoint Fano foliated structure. Let $L:=-mK^{[t]}_{X,\F}$ for some sufficiently divisible positive integer $m$. If $(X,\F,L)$ is $t$-K-semistable, then $(X,\F,t)$ is klt. In particular, $X$ is potentially klt, and if it is $\Q$-factorial it is klt. Furthermore, $X$ is of Fano type.
\end{theorem}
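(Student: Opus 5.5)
The argument is by contradiction. By Theorem \ref{thm:mixed-odaka-singularity-obstruction-normal}, $t$-K-semistability of $(X,\F,L)$ already gives that $(X,\F,t)$ is log canonical. Suppose it is not klt. Then there is a prime divisor $E_0$ over $X$ with $A^{[t]}_{X,\F}(E_0)=0$.

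\textbf{Extracting an lc place.} Apply the $\Q$-factorial qdlt modification of \cite[Theorem~1.9]{CHLMSSX24}, exactly as in the first step of the proof of Lemma \ref{lem:relative-mixed-lc-modification}. This yields $h:X'\to X$ extracting only non-klt places, which are now places of mixed discrepancy exactly $0$. If $h$ happens to be an isomorphism (the non-klt centre being a divisor-free locus is impossible here, since $X$ normal and $B=0$), the qdlt structure still produces such a place. So fix one $h$-exceptional divisor $E_0$ with $A^{[t]}_{X,\F}(E_0)=0$ and centre $Z_0\subset X$.

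\textbf{Building the test configuration.} I would take the deformation to the normal cone type degeneration along $\ord_{E_0}$. Concretely, use the flag ideal
\[
\J=\sum_k \mathfrak a_k(\ord_{E_0})\tau^{N-k}+(\tau^N),
\]
or equivalently the filtration given by $\ord_{E_0}$ truncated at a small slope $\epsilon$. Take its normalised blow-up $\B$ with $\L_\epsilon=\M-\epsilon E$, which is relatively ample for small $\epsilon$, and with the saturated foliation. By \cite[Lemma 7.12]{Pap26} this is an $\F$-compatible test configuration. By the Rees-valuation argument in the proof of Theorem \ref{thm:mixed-odaka-singularity-obstruction-normal}, the exceptional divisors $G$ over the relevant component satisfy $\ord_G(\tau)=1$ and $\ord_G|_{K(X)}=c\,\ord_{E_0}$. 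Hence Lemma \ref{lem:mixed-gauss-rees-coefficient} gives coefficient $0$ along them. The remaining exceptional coefficients are $\ge 0$ by Corollary \ref{cor:mixed-relative-effective}.

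Ideally one arranges $\J$ so that its only Rees divisor is $G_0$, for instance by using the graded sequence of $E_0$ directly, i.e.\ the Rees algebra $\bigoplus_k \pi_*\cO(-kE_0)$ when $E_0$ can be extracted by itself. In that case $\operatorname{Disc}^{[t]}=0$.

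\textbf{Computing the canonical term.} Here $\mu=1/m$ and $\Pi^*p_1^*K^{[t]}_{X,\F}=-\frac1m\M$. Repeating the computation of Lemma \ref{lem:canonical-part-mixed-general-type} with the sign flipped gives
\[
\operatorname{Can}^{[t]}=-\frac{1}{mV(n+1)}\,\cN^n\cdot(\M+n\epsilon E),
\]
and this is strictly negative by Lemma \ref{lem:odaka-canonical-part-inequality}, since the flag ideal is nontrivial. Hence $\DF^{[t]}<0$, contradicting semistability. Hence $(X,\F,t)$ is klt.

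\textbf{Main obstacle and the remaining consequences.} The main obstacle is guaranteeing $\operatorname{Disc}^{[t]}=0$. In general the blow-up may produce extra exceptional divisors with positive coefficient. I would handle this by letting $\epsilon\to0$ and using the $S$-coefficient style expansion of Proposition \ref{prop:mixed-S-leading-term}. In that expansion the discrepancy term has leading order $\epsilon^{n-s+1}$ times an intersection supported only on divisors dominating $Z_0$, while the canonical term is of order $\epsilon^{n-s+1}$ as well. So I would need the Rees divisors to be exactly the ones with coefficient $0$, which is arranged by choosing $E_0$ to be extracted alone, via a relative ample model over $X$ as in Lemma \ref{lem:relative-mixed-lc-modification}.

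Once klt is established, \cite[Theorem 1.10(1)]{CHLMSSX24} gives that $X$ is potentially klt (and klt if $\Q$-factorial) and of Fano type, since $-K^{[t]}_{X,\F}$ is ample.
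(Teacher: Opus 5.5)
There is a genuine gap, and it is at the step you flag yourself: you never produce an $\F$-compatible test configuration with $\operatorname{Disc}^{[t]}=0$.

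For the valuation-ideal flag $\sum_k\mathfrak a_k(\ord_{E_0})\tau^{N-k}+(\tau^N)$ the Rees valuations are not controlled. The Rees-valuation computation in the proof of Theorem~\ref{thm:mixed-odaka-singularity-obstruction-normal} is for $I+(\tau^q)$ with $I$ an ideal whose Rees valuations are already known, so it does not transfer. Exceptional divisors with positive coefficient can therefore occur, and the asymptotic expansion cannot absorb them. Rescale Proposition~\ref{prop:mixed-S-leading-term} to $\L_\epsilon=\M-\epsilon E$. The discrepancy term is then $\binom ns V^{-1}S^{[t]}_{(X,\F),L}(\J)\,\epsilon^{n-s}+O(\epsilon^{n-s+1})$, while $\operatorname{Can}^{[t]}=O(\epsilon^{n-s+1})$. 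So the canonical term is of strictly higher order in $\epsilon$, not the same order. A single positive coefficient on a Rees divisor over a top-dimensional component already gives $\DF^{[t]}>0$ for small $\epsilon$.

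Your proposed remedy, extracting $E_0$ alone ``via a relative ample model over $X$ as in Lemma~\ref{lem:relative-mixed-lc-modification}'', also fails. That lemma gets relative ampleness from strictly negative discrepancies. Here $(X,\F,t)$ is lc, so the qdlt modification is crepant: $K_{\cA'}=h^*K^{[t]}_{X,\F}\equiv_X 0$. Its relative ample model over $X$ is $X$ itself, so nothing is extracted.

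The missing observation is that you do not need to isolate one divisor at all. As you note yourself, every $h$-exceptional divisor $E_i$ of the qdlt modification is an nklt place of the lc structure, so $A^{[t]}_{X,\F}(E_i)=0$ for all of them. The paper proceeds as follows:
\begin{enumerate}
\item Take an effective $h$-exceptional $A'$ with $-A'$ $h$-ample and set $I:=h_*\cO_{X'}(-mA')$. Then $X'$ is the normalised blow-up of $I$, and the Rees valuations of $I$ are exactly the $\ord_{E_i}$.
\item Choose $q$ divisible by every $b_i=\ord_{E_i}(I)$ and set $\J=\overline{(I+(\tau^q))^N}$. Every $\Pi$-exceptional divisor $G$ then satisfies $\ord_G(\tau)=1$ and $\ord_G|_{K(X)}=c\,\ord_{E_i}$ for some $i$.
\item Lemma~\ref{lem:mixed-gauss-rees-coefficient} gives coefficient $c\,A^{[t]}_{X,\F}(E_i)=0$ along each such $G$. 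Hence $K^{[t]}_{\B/(X\times\PP^1)}=0$ identically and $\operatorname{Disc}^{[t]}=0$ exactly.
\end{enumerate}
Your computation of the strictly negative canonical term is correct and then finishes the argument, with no asymptotic comparison needed.

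The rest of your outline matches the paper. The one difference is that the paper takes the Fano-type conclusion from \cite[Theorem C]{CHLMSSX25} rather than \cite[Theorem 1.10(1)]{CHLMSSX24}.
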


\begin{proof}
By Theorem \ref{thm:mixed-odaka-singularity-obstruction-normal}, $t$-K-semistability implies that $(X,\F,t)$ is log canonical. Suppose, for contradiction, that $(X,\F,t)$ is not klt.

Since $0<t<1$, we may apply the qdlt modification theorem for algebraically integrable adjoint foliated structures. More precisely, \cite[Theorem~1.9]{CHLMSSX24} gives a projective birational morphism $h:X'\to X$ such that, if $\F':=h^{-1}\F$ and
\[
B'=\operatorname{Exc}(h)^{\rm ninv}+(1-t)\operatorname{Exc}(h)^{\rm inv},
\]
then $(X',\F',B',0,t)$ is $\Q$-factorial lc. Moreover, every $h$-exceptional prime divisor is an nklt place of $(X,\F,t)$, and since $(X,\F,t)$ is lc, every such divisor has mixed log discrepancy equal to zero. In addition, because the original structure is lc, the qdlt modification is crepant:
\[
K(X',\F',B',0,t)=h^*K(X,\F,0,0,t).
\]
Here we use the notation of \cite[Theorem~1.9]{CHLMSSX24}.

Because $(X,\F,t)$ is not klt, the non-klt locus is nonempty. Since $0<t<1$ and there is no boundary, a prime divisor lying on $X$ has positive mixed log discrepancy; hence the non-klt locus is detected by exceptional divisors over $X$. Thus, after possibly replacing $h$ by the qdlt modification over the relevant component, we may assume that $h$ is not an isomorphism. In particular, we can take $h$ to be the qdlt modification extracting the relevant non-klt places. Let $E_1,\ldots,E_\ell$ be the $h$-exceptional prime divisors. In particular, $A^{[t]}_{X,\F}(E_i)=0$.

We now choose an effective $h$-exceptional $\Q$-divisor $A'$ such that $-A'$ is $h$-ample. After replacing $A'$ by a sufficiently divisible multiple, we take $m>0$ so that $mA'$ is Cartier and the relative algebra
\[
\bigoplus_{k\ge 0}h_*\cO_{X'}(-kmA')
\]
is generated in degree one. We define the ideal $I:=h_*\cO_{X'}(-mA')\subset \cO_X$ where $X'\simeq \operatorname{Bl}_I(X)^\nu$. In particular, the Rees valuations of $I$ are precisely the divisorial valuations $\ord_{E_i}$ associated to the $h$-exceptional prime divisors. For each $i$, we let $b_i:=\ord_{E_i}(I)>0$, and we choose $q>0$ divisible by all the $b_i$. We also choose $N\gg 0$, and consider the ideal $\J:=\overline{(I+(s^q))^N} \subset \cO_{X\times\A^1}$ and the normalised blow-up, 
\[
\Pi:\B:=\operatorname{Bl}_{\J}(X\times\A^1)^\nu
\to X\times\A^1.
\]
with saturated birational transform of the product foliation $\F_\B$. Recall that as in \cite[Lemma 7.12]{Pap26}, $(\B,\F_\B,\L_r)$, with $\L_r:=r\M-E$, and $\M:=\Pi^*p_1^*L$, is a (semi-)ample test configuration.

Raising $I+(\tau^q)$ to the $N$-th power does not change the normalised blow-up or its Rees valuations. Let $G_\alpha$ be a $\Pi$-exceptional prime divisor, and let $w_\alpha:=\ord_{G_\alpha}$. Since $w_\alpha$ is a Rees valuation of $I+(\tau^q)$, its restriction to $K(X)$ is a positive multiple of one of the Rees valuations of $I$, i.e. $w_\alpha|_{K(X)}=c_\alpha\ord_{E_{i(\alpha)}}$. Moreover, along a Rees valuation of $I+(\tau^q)$, the two summands $I$ and $(\tau^q)$ have the same value. Therefore $c_\alpha b_{i(\alpha)}=q\,w_\alpha(\tau)$. By the choice of $q$, and after taking the primitive integral normalisation of the divisorial valuation $w_\alpha=\ord_{G_\alpha}$, we have $w_\alpha(\tau)=1$ and $c_\alpha=\frac{q}{b_{i(\alpha)}}$. Thus every $\Pi$-exceptional prime divisor $G_\alpha$ satisfies $\ord_{G_\alpha}(\tau)=1$ and $\ord_{G_\alpha}|_{K(X)}=c_\alpha\ord_{E_{i(\alpha)}}$ for some $c_\alpha>0$.

By applying Lemma \ref{lem:mixed-gauss-rees-coefficient}, we obtain
\[
\operatorname{coeff}_{G_\alpha} K^{[t]}_{\B/(X\times\A^1)} = A^{[t]}_{X,\F} \left(\ord_{G_\alpha}|_{K(X)}\right) = c_\alpha A^{[t]}_{X,\F}(E_{i(\alpha)}) = 0.
\]
Since $A^{[t]}_{X,\F}(E_i)=0$ for every $i$ every coefficient of $K^{[t]}_{\mathcal B/(X\times\A^1)}$ along a $\Pi$-exceptional prime divisor is zero. Since this relative mixed canonical divisor is $\Pi$-exceptional, it follows that $K^{[t]}_{\mathcal B/(X\times\A^1)}=0$. After compactifying over $\PP^1$, the same equality gives $K^{[t]}_{\mathcal B/(X\times\PP^1)}=0$ since the flag-ideal modification is trivial over $\PP^1\setminus\{0\}$. Therefore the discrepancy part of the mixed Donaldson--Futaki invariant of the flag-ideal test configuration $(\B,\F_{\B},\L_r)$ vanishes, i.e. $\operatorname{Disc}^{[t]}(\B,\L_r)=0$.

We now use the anti-adjoint polarisation. Since $L=-mK^{[t]}_{X,\F}$, the canonical part of the mixed Donaldson--Futaki invariant has the
opposite sign from the canonically polarised case which was analysed in Lemma \ref{lem:canonical-part-mixed-general-type}. More concretely, recall that $\cO_{\B}(-E)=\J\cdot\cO_{\B}$. Since $L=-mK^{[t]}_{X,\F}$, we have $K^{[t]}_{X,\F}=-\frac{1}{m}L$ and $\mu(X,\F,rL)=\frac{1}{mr}$. Thus the canonical part is
\[
\operatorname{Can}^{[t]}_r = -\frac{1}{m r^{n+1}V(n+1)} \L_r^n\cdot(r\M+nE).
\]
The flag ideal is nontrivial, so Lemma \ref{lem:odaka-canonical-part-inequality} gives $\L_r^n\cdot(\M+nE)>0$. Therefore the canonical part is strictly negative.

Since the discrepancy part vanishes, for sufficiently large exponent in the flag-ideal construction, we obtain $\operatorname{DF}^{[t]}(\B,\F_{\B},\L_r)<0$ for the $\F$-compatible test configuration $(\B,\F_{\B},\L_r)$. Hence $(X,\F,t)$ must be $t$-K-unstable, contradicting the assumed $t$-K-semistability of $(X,\F,L)$. Hence no divisor with zero mixed log discrepancy exists. Since $(X,\F,t)$ is already log canonical by Theorem \ref{thm:mixed-odaka-singularity-obstruction-normal}, it follows that $(X,\F,t)$ is klt. 

To conclude, by \cite[Theorem 1.10(1)]{CHLMSSX24}, since $(X,\F,t)$ is klt, the variety $X$ is potentially klt. The statement on $\Q$-factoriality follows from the definition of potentially klt (see also \cite[Proposition 9.1]{CHLMSSX25}). The fact that $X$ is of Fano type follows from \cite[Theorem C]{CHLMSSX25}.
\end{proof}

\printbibliography

@article{Odaka12,
  author    = {Yuji Odaka},
  title     = {A generalization of the Ross--Thomas slope theory},
  journal   = {Osaka Journal of Mathematics},
  volume    = {50},
  number    = {1},
  year      = {2013},
  pages     = {171--185}
}

@article{Odaka13,
  author  = {Odaka, Yuji},
  title   = {The {GIT} stability of polarized varieties via discrepancy},
  journal = {Annals of Mathematics},
  volume  = {177},
  number  = {2},
  year    = {2013},
  pages   = {645--661},
  doi     = {10.4007/annals.2013.177.2.6}
}

@article{CHLMSSX25,
  author = {Cascini, Paolo and Han, Jingjun and Liu, Jihao and Meng, Fanjun and
            Spicer, Calum and Svaldi, Roberto and Xie, Lingyao},
  title = {On finite generation and boundedness of adjoint foliated structures},
  eprint = {2504.10737},
  archivePrefix = {arXiv},
  primaryClass = {math.AG},
  year = {2025}
}

@article{CHLMSSX24,
  author       = {Paolo Cascini and Jingjun Han and Jihao Liu and Fanjun Meng and Calum Spicer and Roberto Svaldi and Lingyao Xie},
  title        = {Minimal model program for algebraically integrable adjoint foliated structures},
  year         = {2024},
  eprint       = {2408.14258},
  archivePrefix= {arXiv},
  primaryClass = {math.AG},
  url          = {https://arxiv.org/abs/2408.14258}
}

@book{Xu2025,
  author    = {Xu, C.},
  title     = {K-stability of Fano Varieties},
  series    = {New Mathematical Monographs},
  volume    = {50},
  publisher = {Cambridge University Press},
  year      = {2025},
  isbn      = {9781009538770},
  doi       = {10.1017/9781009538763}
}

@book{Brunella15,
  author    = {Marco Brunella},
  title     = {Birational Geometry of Foliations},
  series    = {IMPA Monographs},
  volume    = {1},
  publisher = {Springer},
  year      = {2015},
  doi       = {10.1007/978-3-319-14310-1}
}

@article{AD13,
  author    = {Carolina Araujo and St{\'e}phane Druel},
  title     = {On Fano foliations},
  journal   = {Advances in Mathematics},
  volume    = {238},
  year      = {2013},
  pages     = {70--118},
  doi       = {10.1016/j.aim.2013.01.003}
}

@article{Odaka11,
  author  = {Odaka, Yuji},
  title   = {The Calabi conjecture and {$K$}-stability},
  journal = {Int. Math. Res. Not. IMRN},
  year    = {2012},
  number  = {10},
  pages   = {2272--2288},
  doi     = {10.1093/imrn/rnr159}
}

@article{CDS2013, 
title={K\"{a}hler–{E}instein Metrics and Stability}, 
volume={2014},
DOI={10.1093/imrn/rns279},
number={8},
journal={International Mathematics Research Notices}, 
author={Chen, Xiuxiong and Donaldson, Simon and Sun, Song},
year={2013},
pages={2119–2125}
}

@article{ACSS21,
  author       = {Florin Ambro and Paolo Cascini and Vyacheslav V. Shokurov and Calum Spicer},
  title        = {Positivity of the moduli part},
  year         = {2021},
  eprint       = {2111.00423},
  archivePrefix= {arXiv},
  primaryClass = {math.AG},
  doi          = {10.48550/arXiv.2111.00423},
  url          = {https://arxiv.org/abs/2111.00423}
}

@article{CS25,
  author    = {Paolo Cascini and Calum Spicer},
  title     = {On the {MMP} for rank one foliations on threefolds},
  journal   = {Forum of Mathematics, Pi},
  volume    = {13},
  year      = {2025},
  pages     = {e20},
  doi       = {10.1017/fmp.2025.10013}
}

@misc{Pap26,
      title={K-stability of adjoint foliated structures}, 
      author={Theodoros Stylianos Papazachariou},
      year={2026},
      eprint={2605.21995},
      archivePrefix={arXiv},
      primaryClass={math.AG},
      url={https://arxiv.org/abs/2605.21995}, 
}

@misc{CLSV26,
      title={Birational boundedness of stable families}, 
      author={Paolo Cascini and Jihao Liu and Calum Spicer and Roberto Svaldi},
      year={2026},
      eprint={2604.24106},
      archivePrefix={arXiv},
      primaryClass={math.AG},
      url={https://arxiv.org/abs/2604.24106}, 
}

\end{document}